\newcommand{\BR}{{\mathbb{R}}}
\newcommand{\BC}{{\mathbb{C}}}
\newcommand{\BF}{{\mathbb{F}}}
\newcommand{\BQ}{{\mathbb{Q}}}
\newcommand{\BG}{{\mathbb{G}}}
\newcommand{\BH}{{\mathbb{H}}}
\newcommand{\gC}{\Gamma}
\newcommand{\gs}{\sigma}
\newcommand{\gO}{\Omega}
\newcommand{\ga}{\alpha}
\newcommand{\gt}{\tau}
\newcommand{\ti}[1]{\tilde{#1}}
\newcommand{\aut}{\text{Aut}}
\newcommand{\out}{\text{Out}}
\newcommand{\Hom}{\text{Hom}}
\newcommand{\Ad}{\text{Ad}}
\newcommand{\SL}{\text{SL}}
\def\Aut{\text{Aut}}
\def\Aut{\text{Out}}
\newtheorem{prop}{Proposition}[section]
\newtheorem{thm}[prop]{Theorem}
\newtheorem{lem}[prop]{Lemma}
\newtheorem{cor}[prop]{Corollary}
\theoremstyle{definition}
\newtheorem{defn}[prop]{Definition}
\newtheorem{rem}[prop]{Remark}
\newtheorem{prob}[prop]{Problem}
\newcommand\PS{{\mathcal{PS}}}
\newcommand\PP{{\mathcal{P}}}
\newcommand\AAA{{\mathcal{A}}}
\newcommand\RR{{\mathcal{R}}}
\newcommand\union{\cup}
\long\def\@savemarbox#1#2{\global\setbox#1\vtop{\hsize\marginparwidth 
  \@parboxrestore\tiny\raggedright #2}}
\begin{document}
\author{Tsachik Gelander and Yair Minsky}

\thanks{Tsachik Gelander acknowledges the financial support from the European
Research Council (ERC)/ grant agreement 203418, and the Israeli Science
Foundation grant 1345/07.
Yair Minsky acknowledges support from the National Science
Foundation. 
}

\date{\today}

\title{The dynamics of $\aut(F_n)$ on redundant representations}
\maketitle

\begin{abstract}
We study some dynamical properties of the canonical $\aut(F_n)$-action
on the space $\mathcal{R}_n(G)$ of redundant representations of the
free group $F_n$ in $G$, where $G$ is the group of rational points of
a  simple algebraic group over a local field.  We show that this
action is always minimal and ergodic, confirming a conjecture of
A. Lubotzky. On the other hand for the classical cases where
$G=\SL_2(\BR)$ or $\SL_2(\BC)$ we show that the action is not weak
mixing, in the sense that the diagonal action on $\mathcal{R}_n(G)^2$
is not ergodic. 
\end{abstract}

\section{A short introduction}

Let $G$ be a group and consider $\Hom(F_n,G)$ --- the representation
space of the free group $F_n$ in $G$. The automorphism group
$\aut(F_n)$ acts naturally on $\Hom(F_n,G)$ by pre-compositions,
inducing a canonical action of $\Aut(F_n)$ on
$\chi_n(G)=\Hom(F_n,G)/G$ --- the space of conjugacy classes of
representations. (The difference between this and the character
variety $\Hom(F_n,G)//G$ will not be important to us in this paper,
and in particular they agree on the set of irreducible
representations.)
When $G$ has an additional structure (e.g. $G$ is algebraic, topological, measurable or finite, etc.) $\Hom(F_n,G)$ often inherits the structure from $G$ and the action respects the structure. For instance, if $G$ is a topological group $\Hom(F_n,G)$ is a topological space and $\aut(F_n)$ acts by homeomorphisms. Similarly, if $G$ is a locally compact group, the Haar measure induces a measure on $\Hom(F_n,G)$ and $\aut(F_n)$ preserves its measure class. Moreover, if $G$ is unimodular, $\aut(F_n)$ preserves the measure. There are various reasons why people are interested in understanding the invariant subsets, and more generally the dynamics, of this action (we refer to Lubotzky's survey \cite{Lub} for some of the motivations). 

W.M. Goldman conjectured that for every compact connected Lie group $G$, if $n\ge 3$, the $\aut(F_n)$ action on $\Hom(F_n,G)$ is ergodic. As he pointed out for $n=2$ the action is not ergodic in general since the function $f\mapsto \text{trace}(f(xyx^{-1}y^{-1}))$, where $x,y\in F_2$ are free generators, is $\aut(F_2)$-invariant and nonconstant if $G$ is noncommutative. In \cite{Goldman}, Goldman proved his conjecture for the case that all the simple factors of $G$ are locally isomorphic to $\text{SU}(2)$. The general case of Goldman's conjecture was proved later in \cite{Gelander}. 

The compact-connected case is a bit misleading. For general $G$ one first restricts the attention to the subspace of epimorphisms, where in the context of topological groups, by epimorphism we mean a homomorphism with dense image: 
$$
 \text{Epi}(F_n,G)=\{f\in\Hom(F_n,G):\overline{f(F_n)}=G\}.
$$
This is a measurable invariant subset, often even open, but in general its complement is also big and can be divided to subsets of the form $\text{Epi}(F_n,H)$ for closed subgroups $H\le G$. In the special case when $G$ is connected and compact almost every homomorphism has dense image (see \cite[Lemma 1.10]{Gelander}) and $\Hom(F_n,G)=\text{Epi}(F_n,G)$ as measure spaces.

The first noncompact cases that were studied are $G=\SL_2(k)$ where
$k$ is a local field. The mutual outcomes were somewhat surprising.
More or less simultaneously, Y. Glasner \cite{Glasner} showed that
when $k$ is nonarchimedean $\aut(F_{n\ge 3})$ acts ergodically on
$\text{Epi}(F_n,G)$\footnote{Assuming $\text{char}(k)\ne 2$}, while
Y. Minsky \cite{minsky:primitive} showed that for $k=\BR,\BC$ the
action is not ergodic. Minsky \cite{minsky:primitive} defined the
notion of primitive-stable homomorphism, and proved that the set
of primitive-stable representations is open, containing
the Schottky representations as well as a part of $\text{Epi}(F_n,G)$,
and the action of $Out(F_n)$ is properly discontinuous
on the set $\mathcal{PS}(F_n,G)$ of conjugacy classes of 
primitive-stable representations.
On the other hand, in the nonarchimedean case, as one
can deduce from Weidmann's theorem \cite{Weidmann,Glasner}, there are
no primitive stable representations of $F_n$ in
$\SL_2(k)$\footnote{Kapovich and Weidmann \cite{KW} established a kind
  of generalization of Weidmann's theorem which applies in particular
  for $\SL_2(\BR,\BC)$. It might be interesting to investigate the
  interplay between Minsky's and Kapovich--Weidmann's results.}.

In an attempt to understand the global picture, and partly motivated by analogous results from finite group theory, A. Lubotzky \cite{Lub} formulated the correct conjecture, namely that the action on the {\it big} subset of redundant representations is {\it always} ergodic. Recall:

\begin{defn}
A representation $\rho:F_n\to G$ is redundant if there exists a proper free factor $A$ of $F_n$ with $\rho(A)$ dense in $G$. We denote by $\mathcal{R}_n(G)\subset \Hom(F_n,G)$ the set of redundant representations.
\end{defn} 

When $G$ is a simple Lie group over a local field, the set $\mathcal{R}_n(G)$ is open (see Corollary \ref{cor:R-open}). 

At first glance, Lubotzky's conjecture may seem wrong for the following reason. Note that a representation $\rho$ is redundant iff there is a free generating set $\{x_1,\ldots,x_n\}$ for $F_n$ such that $\rho(\langle x_1,\ldots,x_{n-1}\rangle)$ is dense in $G$. Call a representation $\rho:F_n\to G$ {\it very redundant} if for {\it any} free generating set $\{ x_1,\ldots,x_n\}$ for $F_n$ and every $1\le i\le n$, $\rho(\langle x_j:j\ne i\rangle)$ is dense in $G$ and let $\mathcal{VR}_n(G)$ be the set of very redundant representations. Clearly $\mathcal{VR}_n(G)$ is measurable, $\aut(F_n)$-invariant and strictly contained in $\mathcal{R}_n(G)$. Hence if both $\mathcal{VR}_n(G)$ and $\mathcal{R}_n(G)\setminus\mathcal{VR}_n(G)$ have positive measure, the conjecture is false. 
However, while when $G$ is compact and $n\ge 3$, almost every representation is very redundant, when $G$ is noncompact one can show that there are no very redundant representations at all. Moreover, Lubotzky's conjecture is indeed true (see Theorem \ref{thm1} below).

\section{Statement of the main results}

The following theorem confirms Lubotzky's conjecture:

\begin{thm}\label{thm1}
Let $k$ be a local field, $\BG$ a Zariski connected simply connected\footnote{When $k=\BR,\BC$ the simply connectedness assumption is unnecessary.} simple
$k$ group, and $G=\BG(k)$ the group of $k$ points. If $\text{char}(k)>0$ assume further that the adjoint representation of $G$ is irreducible.
Then the action of $\aut(F_n)$ on $\mathcal{R}_n(G)$ is ergodic with respect to the Haar measure induced from $\Hom(F_n,G)\cong G^n$.
\end{thm}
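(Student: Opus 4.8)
The plan is to prove ergodicity by combining a mixing/ergodicity input for the action of a single ``elementary'' automorphism (a transvection) on the relevant slice of $G^n$ with a bootstrapping argument using the rich supply of free factors available once $n$ is large enough. Concretely, suppose $\phi\colon \mathcal{R}_n(G)\to\C$ is a bounded $\aut(F_n)$-invariant measurable function; I want to show $\phi$ is almost everywhere constant. Fixing a free generating set $x_1,\dots,x_n$, identify $\Hom(F_n,G)$ with $G^n$. The automorphism $x_n\mapsto x_n x_{n-1}$, $x_i\mapsto x_i$ $(i<n)$ acts on $G^n$ by $(g_1,\dots,g_n)\mapsto(g_1,\dots,g_{n-1},g_n g_{n-1})$, i.e.\ it is translation in the last coordinate by the (varying) element $g_{n-1}$; more generally, for any $w$ in the free factor $A=\langle x_1,\dots,x_{n-1}\rangle$, the automorphism $x_n\mapsto x_n\cdot w$ fixing the rest acts on the last coordinate by right translation by $\rho(w)$. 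On the redundant set, $\rho(A)$ is dense in $G$ for a.e.\ choice of the first $n-1$ coordinates, so the group generated by these translations acts on the fiber $\{(g_1,\dots,g_{n-1})\}\times G$ with dense image in $G$.

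The key analytic step is then a Mautner-type / decay-of-matrix-coefficients argument: a $G$-invariant (in the last coordinate) $L^\infty$ function is, on a fiber over a point where $\rho(A)$ is dense, invariant under a dense subgroup of $G$ acting by translations, hence constant along that fiber; here one uses that the $\aut(F_n)$-invariance gives genuine $\rho(w)$-invariance for \emph{every} $w\in A$, not merely generic ones, by Fubini and continuity of the action in the measure-theoretic sense. More carefully, I would first show that $\phi$ is invariant under right translation of the last coordinate by $\rho(w)$ for each fixed $w\in A$ and a.e.\ point; then, integrating out, $\phi$ descends to a function of $(g_1,\dots,g_{n-1})$ only --- i.e.\ $\phi$ is independent of the last coordinate. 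By the symmetry of $\aut(F_n)$ (it acts transitively on free bases, so any coordinate can play the role of the last), $\phi$ is independent of each coordinate separately; one more Fubini/measure-theoretic argument (independence of each coordinate implies essential constancy, using that $G^n$ with product measure is a standard probability-like space after normalizing, or working locally) finishes the proof. The role of the hypothesis on free factors is that we need $n\ge 2$ and a genuinely proper free factor of corank one; the case analysis to get a \emph{single} dense $\rho(A)$ and propagate is where redundancy is used essentially, via the characterization $\rho$ redundant $\iff$ some basis has $\rho(\langle x_1,\dots,x_{n-1}\rangle)$ dense.

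The main obstacle I anticipate is justifying the passage from ``$\phi$ is invariant under a dense subgroup of translations on almost every fiber'' to ``$\phi$ is fiberwise essentially constant,'' because the dense subgroup depends measurably on the fiber and a priori one only has countably many relations per fiber; the clean way around this is to fix a \emph{countable} dense subset $W\subset F_n$ (or rather a countable set of elements of $A$ whose images are dense whenever $\rho(A)$ is dense, which exists since $G$ is second countable and $A$ is countable), apply invariance for each of the countably many automorphisms $x_n\mapsto x_n w$ with $w\in W$ simultaneously on a conull set, and then on the conull set where $\rho(A)$ is dense conclude that $\phi$ restricted to the fiber is invariant under a dense subgroup of $(G,\cdot)$ acting on itself by translation, whence constant (left/right translation action of a group on itself with the Haar measure is ergodic for any dense subgroup, since an invariant set is invariant under the closure). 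A secondary subtlety, needed in positive characteristic, is ensuring that ``$\rho(A)$ Zariski-dense'' upgrades to ``dense in the local-field topology'' on a conull set so that the density hypothesis in the redundancy definition is the operative one; here the irreducibility hypothesis on the adjoint representation (plus simplicity and Zariski connectedness of $\BG$) is exactly what rules out the proper closed subgroups that could otherwise trap $\rho(A)$, and one invokes the openness of $\mathcal{R}_n(G)$ from Corollary~\ref{cor:R-open} together with the fact that a Zariski-dense subgroup of $G=\BG(k)$ is either discrete or dense, the discrete locus being null.
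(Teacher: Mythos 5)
Your core analytic mechanism --- a measurable set invariant under the Nielsen moves $x_n\mapsto x_nw$, $w\in A=\langle x_1,\dots,x_{n-1}\rangle$, is invariant along the last-coordinate fiber under translation by the countable group $\rho(A)$, hence fiberwise null or conull wherever $\rho(A)$ is dense --- is exactly the engine of the paper's proof (there phrased via Fubini and ergodicity of a dense subgroup translating on $G$). But the proposal has two genuine gaps. First, the claim that ``on the redundant set, $\rho(A)$ is dense for a.e.\ choice of the first $n-1$ coordinates'' is false for noncompact $G$: redundancy only asserts that \emph{some} proper free factor has dense image, and that factor varies with the point. For $G=\mathrm{SL}_2(\BR)$ and $n=3$, the set of $(g_1,g_2,g_3)$ with $\langle g_1,g_2\rangle$ Schottky but $\langle g_2,g_3\rangle$ dense is open, nonempty, hence of positive measure inside $\mathcal{R}_3(G)$, and there your fixed factor $A$ is not dense. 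Your appeal to ``symmetry of $\aut(F_n)$'' does not repair this, because the conclusions ``$\phi$ is independent of $g_n$'' and ``$\phi$ is independent of $g_1$'' are then established on \emph{different} subsets of $G^n$, and independence statements on non-product sets do not combine via Fubini. The paper resolves precisely this by introducing \emph{strongly redundant} tuples (every $(n-1)$-subtuple dense) and working inside a strongly redundant open cube $\prod U_i$, where all $n$ slice arguments run simultaneously; the existence of such tuples for $n\ge 3$ is itself nontrivial and rests on the intersection lemmas of Section~3 (Corollary~\ref{cor:arch-inter}, Lemmas~\ref{lem:nonarch-inter1}, \ref{lem:2-intersect}, \ref{lem:tiO-nonempty}).

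Second, even after the local step you have no mechanism to globalize: $\mathcal{R}_n(G)$ has infinite measure and is covered by many regions on which different free factors are dense, so ``null or conull on one cube'' must be propagated to all of $\mathcal{R}_n(G)$. The paper does this with Theorem~\ref{thm3} (minimality): for every $f\in\mathcal{R}_n(G)$ some $\gs\cdot f$ lies in the fixed cube $U$, so $\gs^{-1}U$ is a neighborhood of $f$ meeting the invariant set in a null set, and second countability finishes the argument. Nothing in your proposal plays this role, and your suggested final Fubini step (``independence of each coordinate implies constancy after normalizing'') is not available on an infinite-measure, non-product domain. A smaller inaccuracy: your closing remark that a Zariski-dense subgroup of $\BG(k)$ is ``either discrete or dense'' fails in the nonarchimedean case (e.g.\ $\BG(\mathcal{O}_k)$ is Zariski dense, nondiscrete and bounded); the paper's positive-characteristic hypothesis enters instead through Burnside's theorem, guaranteeing $\Ad$ generates $\mathrm{End}(\mathfrak{g})$ in the density criteria.
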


In case $G$ is compact and connected almost every representation of $F_{n\ge 3}$ is redundant, hence Theorem \ref{thm1} recovers Theorem 1.6 of \cite{Gelander}, namely that the action of $\text{Aut}(F_n)$ on the representation variety $Hom(F_n,G)$ is ergodic. 
Similarly, when $k$ is nonarchimedean and $G=\SL_2(k)$, almost every representation of $F_{n\ge 3}$ with dense image is redundant, as Glasner showed using Weidmann's theorem (see \cite{Glasner} for details). Hence the main result of 
\cite{Glasner}\footnote{In \cite{Glasner} also the case of $G=\text{Aut}(T)$, where $T$ is a regular tree, was treated. This case is not covered by \ref{thm1}.} is also a special case of Theorem \ref{thm1}.  

When $G$ is compact and $n\ge 3$, the action
$\aut(F_n)\propto\mathcal{R}_n(G)$ is even weakly mixing. This is
because $\mathcal{R}_n(G)=\text{Hom}(F_n,G)$ (up to measure 0) and
$\text{Hom}(F_n,G)\times \text{Hom}(F_n,G)$ is canonically identified
with $\text{Hom}(F_n,G\times G)$ while $G\times G$ is again a compact
connected group (see \cite{Gelander} for more details). Somehow,
perhaps unexpectedly, this stronger property of compact groups does
not hold in general: 

\begin{thm}\label{thm2}
Let $G=\SL_2(\BR)$ or $\SL_2(\BC)$, and $n\ge 3$. Then the action of
$\text{Out}(F_n)$ on $\overline\RR_n(G)$ is not weakly mixing,
in the sense that the diagonal action of $Out(F_n)$ 
on $\overline\RR_n(G)\times\overline\RR_n(G)$ is not ergodic.
\end{thm}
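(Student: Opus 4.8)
The plan is to produce a non-constant $\out(F_n)$-invariant measurable function on $\overline\RR_n(G)\times\overline\RR_n(G)$, which immediately obstructs ergodicity of the diagonal action. The key point is that, although no single trace function on $\Hom(F_n,G)$ is $\aut(F_n)$-invariant when $n\ge 3$ (unlike the $n=2$ case Goldman observed), a suitably assembled quantity built from \emph{both} coordinates can be made invariant. The natural candidate is the \emph{primitive-stability locus}: recall from \cite{minsky:primitive} that $\mathcal{PS}(F_n,G)$ is open, $\out(F_n)$-invariant, and that $\out(F_n)$ acts properly discontinuously on it, whereas a positive-measure part of $\overline\RR_n(G)$ (the Schottky-like representations) lies in $\mathcal{PS}$ and another positive-measure part does not.

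First I would fix the representation $\rho_0$ in one coordinate to be primitive-stable (Schottky), and consider the slice $\{\rho_0\}\times\overline\RR_n(G)$. Because $\out(F_n)$ acts properly discontinuously on the $\mathcal{PS}$-locus, the orbit of $(\rho_0,\sigma)$ under the diagonal action, projected to the first coordinate, ranges over a discrete (indeed wandering) set; this rigidity in the first factor is what prevents the second factor from being ``stirred'' ergodically. Concretely, I would define, for a pair $(\rho,\sigma)$, the function
\[
\Psi(\rho,\sigma)=\sup\bigl\{\, t\ge 0 : \rho \text{ is $t$-primitive-stable} \,\bigr\},
\]
or rather a symmetrized/combined version using both $\rho$ and $\sigma$, designed so that when one coordinate is deep inside the $\mathcal{PS}$-locus the diagonal $\out(F_n)$-action on the pair cannot be ergodic — the displacement functions controlling primitive stability of $\rho$ transform in a controlled way, and one extracts from them a genuine invariant (for instance, the infimum over primitive words $w$ of a normalized translation-length ratio between $\rho(w)$ and $\sigma(w)$, which is manifestly invariant under simultaneous precomposition by $\aut(F_n)$ since $\aut(F_n)$ permutes primitive conjugacy classes).

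The cleanest route, and the one I would pursue, is this ratio construction. For $w\in F_n$ primitive let $\ell_\rho(w)$ denote the translation length of $\rho(w)$ acting on $\BH^2$ or $\BH^3$; set
\[
I(\rho,\sigma)=\inf_{w \text{ primitive}} \frac{\ell_\rho(w)}{\ell_\rho(w)+\ell_\sigma(w)} .
\]
Since $\aut(F_n)$ acts on the set of primitive conjugacy classes and translation length is a conjugacy invariant, $I$ is $\aut(F_n)$-invariant, hence descends to $\out(F_n)$-invariant on $\overline\RR_n(G)\times\overline\RR_n(G)$. It is Borel measurable. It remains to check that $I$ is non-constant on a positive-measure set: take $\rho=\sigma$ to get value $1/2$ on the diagonal, which has positive measure in $\overline\RR_n(G)\times\overline\RR_n(G)$ only after we thicken it — so instead one compares a Schottky $\rho$ with a rescaled Schottky $\sigma$ (conjugate $\rho$ by a large loxodromic to dilate all lengths) to push $I$ away from $1/2$, while the primitive-stability/bounded-geometry estimates of \cite{minsky:primitive} guarantee the infimum is attained in a controlled range and varies continuously over an open set of pairs, giving two disjoint positive-measure level sets.

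The main obstacle will be the last step: showing $I$ (or whichever invariant one settles on) genuinely takes at least two distinct values each on a set of positive Haar measure, rather than being almost everywhere constant. This requires the quantitative primitive-stability machinery — uniform comparison between word length in $F_n$ and translation length in $G$ for primitive-stable $\rho$, and a lower bound on how $\ell_\rho(w)$ grows — to ensure the infimum defining $I$ is not degenerate (not identically $0$) on the open Schottky set, together with a deformation argument producing pairs on which $I$ is bounded away from its diagonal value. I would expect to handle $\SL_2(\BR)$ via the Fuchsian/Schottky picture directly and then transfer to $\SL_2(\BC)$ by the same hyperbolic-geometry estimates, since \cite{minsky:primitive} already treats both.
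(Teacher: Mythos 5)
There is a genuine gap, and in fact several of the steps as stated would fail. The most basic problem is that every element of $\overline\RR_n(G)$ has dense image in $G$, so Schottky (or any discrete, faithful, primitive-stable) representations simply do not lie in $\overline\RR_n(G)$; indeed, since the single action on $\RR_n(G)$ is ergodic and minimal (Theorems \ref{thm1} and \ref{thm3}) and $\PS_n$ is open, invariant and properly discontinuous, $\PS_n\cap\overline\RR_n(G)$ must be empty. So you cannot ``fix $\rho_0$ primitive-stable'' in one coordinate. Your fallback invariant $I(\rho,\sigma)=\inf_w \ell_\rho(w)/(\ell_\rho(w)+\ell_\sigma(w))$ is indeed $\aut(F_n)$-invariant, but it degenerates on the space at hand: for a representation with dense image there are plenty of primitive elements sent to elliptics (translation length $0$), so for almost every pair the infimum is $0$ (or of the indeterminate form $0/0$), making $I$ essentially constant and giving no obstruction. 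Finally, the proposed deformation ``conjugate $\rho$ by a large loxodromic to dilate all lengths'' does nothing: translation length is a conjugacy invariant, and conjugation acts trivially on $\chi_n(G)$ anyway.

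The idea that is missing is the one the paper is built on: one must work with \emph{pairs} jointly, defining $\PS^2_n$ as the set of pairs $([\rho_1],[\rho_2])$ for which every primitive axis is mapped quasi-geodesically by at least one of the two representations. This set is open, $\out(F_n)$-invariant, and the action on it is properly discontinuous (by the same length-comparison arguments you allude to), so its indicator function is the desired invariant --- but the entire difficulty is showing that $\PS^2_n$ meets $\overline\RR_n(G)\times\overline\RR_n(G)$. The paper does this by producing two geometrically finite representations $\rho_1,\rho_2$, each with a single parabolic $\alpha_i$ lying in a proper free factor (hence each $\rho_i$ individually fails to be primitive-stable and can be approximated by redundant representations), arranged via Whitehead graphs so that any conjugacy class that is badly non-quasi-geodesic for \emph{both} $\rho_1$ and $\rho_2$ has Whitehead graph containing $Wh(\{\alpha_1,\alpha_2\},X)$, which is connected without cutpoints and therefore, by Whitehead's Lemma, cannot be primitive. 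Openness of $\PS^2_n$ then transfers the pair into $\overline\RR_n(G)^2$. None of this combinatorial-geometric construction is present in your proposal, and without it (or a substitute) the non-degeneracy step you correctly flag as ``the main obstacle'' cannot be carried out.
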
  
(Here $\overline\RR_n(G)$ is the image of $\RR_n(G)$ in 
$\chi_n(G)$. Note that nonergodicity in the quotient implies
it for the action of $\aut(F_n)$ upstairs). 

\begin{rem}
We state and prove Theorem \ref{thm2} for $\SL_2$ since we will use $3$ dimensional hyperbolic geometry in the proof. However the result extends immediately to every rank one simple Lie group $G$ (see also Remark \ref{rem:NMr1}).
\end{rem}

Recall that an action on a topological space is minimal if every orbit is dense. The representation space $\text{Hom}(F_n,G)$, hence also its subspace $\mathcal{R}_n(G)$, inherits a canonical topology from $G$.
Moreover, the set $\mathcal{R}_n(G)$ is open  (cf. Corollary \ref{cor:R-open} below).
The following result is new even in the context of compact Lie groups (although a hint for it for compact $G$ is given in \cite[Remark 1.5(1)]{Gelander}).

\begin{thm}\label{thm3}
Let $G$ be as in Theorem \ref{thm1}. The action of $\text{Aut}(F_n)$ on $\mathcal{R}_n(G)$ is minimal.
\end{thm}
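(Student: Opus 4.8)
The plan is to prove minimality by showing that an arbitrary redundant representation $\rho \in \mathcal{R}_n(G)$ can be moved, via a sequence of automorphisms of $F_n$, arbitrarily close to any other redundant representation $\rho'$. Since $\mathcal{R}_n(G)$ is open (Corollary \ref{cor:R-open}), it suffices to work with a convenient dense subset and to show that from any $\rho$ one can reach a neighborhood of any $\rho'$. The natural first step is to use the definition of redundancy: after applying an automorphism we may assume $\rho(\langle x_1,\dots,x_{n-1}\rangle)$ is dense in $G$. I would then argue that, by applying Nielsen transformations that fix $x_n$ and act on the first $n-1$ coordinates, together with the elementary automorphisms $x_n \mapsto x_n x_i$, one can adjust $\rho(x_n)$ freely while keeping the first $n-1$ generators' image dense. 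Concretely, since $\rho(\langle x_1,\dots,x_{n-1}\rangle)$ is dense, for any target element $g\in G$ and any $\varepsilon>0$ there is a word $w$ in $x_1,\dots,x_{n-1}$ with $\rho(w)$ within $\varepsilon$ of $g\rho(x_n)^{-1}$; applying the automorphism $x_n\mapsto w x_n$ (which fixes the other generators) moves $\rho(x_n)$ to within $\varepsilon$ of $g$. Thus the closure of the $\aut(F_n)$-orbit of $\rho$ contains, for the subset of automorphisms preserving density of the first $n-1$ coordinates, representations whose last coordinate is essentially arbitrary.

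The second step is to see that we have enough freedom in the first $n-1$ coordinates as well. Here I would appeal to a density/transitivity statement for the $\aut(F_{n-1})$-action (or the action of Nielsen moves among $x_1,\dots,x_{n-1}$) on the set of $(n-1)$-tuples generating a dense subgroup — essentially the topological transitivity underlying the ergodicity theorem. In fact the cleanest route is probably to cite or re-derive from the proof of Theorem \ref{thm1} (or its ingredients) the statement that the $\aut(F_n)$-action on $\mathcal{R}_n(G)$ is topologically transitive, i.e. has a dense orbit; then minimality follows from transitivity plus a \emph{reversibility} observation: every Nielsen generator of $\aut(F_n)$ has an inverse that is also a Nielsen generator, so if the orbit of $\rho$ accumulates on $\rho'$ then the orbit of $\rho'$ accumulates on $\rho$. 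Combining "some orbit is dense" with "the relation "$\rho'\in\overline{\aut(F_n)\cdot\rho}$" is symmetric" and with the openness of $\mathcal{R}_n(G)$ gives that every orbit is dense. I would organize the argument this way: (i) prove topological transitivity by the explicit Nielsen-move manipulations sketched above, approximating any target tuple coordinate by coordinate using density of $\rho(A)$; (ii) note the symmetry of the accumulation relation; (iii) conclude.

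The main obstacle I anticipate is step (i) done honestly: one must show that the word-approximation procedure can be carried out \emph{simultaneously} in all coordinates without destroying the density of the relevant free factor at intermediate stages, and that one can reach \emph{any} redundant $\rho'$ and not merely those with a dense $(n-1)$-generated factor in the "first" position. To handle the latter, one first uses an automorphism to bring $\rho'$ into standard form (some proper free factor has dense image), and then the approximation is performed relative to that free factor. The subtlety is that while we drive the remaining coordinate(s) toward their targets we are multiplying by words in $A$, and we must ensure $\rho(A)$ stays dense — but $A$ itself is untouched by the automorphisms $x_n\mapsto w x_n$, $w\in A$, so its image literally does not change, which resolves the issue. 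A secondary point is the archimedean-versus-nonarchimedean uniformity of the "density of $\rho(A)$ implies good approximation of any $g\in G$" step, but this is exactly the kind of statement already used in establishing that $\mathcal{R}_n(G)$ is open and that the action is ergodic, so it can be quoted. The remaining work is bookkeeping with Nielsen generators and an $\varepsilon/2^k$-style iteration, which I would not grind through here.
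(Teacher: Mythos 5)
Your two-step strategy --- establish topological transitivity by Nielsen-move approximations, then upgrade to minimality via a ``reversibility'' observation --- has two genuine gaps, and the second is fatal. The reversibility step is unjustified: the fact that the inverse of a Nielsen generator is again a Nielsen generator only says that $\aut(F_n)$ is a group, which does not make the orbit-closure relation symmetric. If $\gs_k\cdot\rho\to\rho'$, continuity of each $\gs_k^{-1}$ gives $\gs_k^{-1}\cdot(\gs_k\cdot\rho)=\rho$, but not $\gs_k^{-1}\cdot\rho'\to\rho$: the automorphisms act on $G^n$ by word maps of unbounded length, so the family $\{\gs_k^{-1}\}$ is nowhere near equicontinuous and the moduli of continuity degenerate as $k\to\infty$. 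Symmetry of ``$\rho'\in\overline{\aut(F_n)\cdot\rho}$'' holds for isometric or equicontinuous actions, not here; without it, ``some orbit is dense'' does not yield ``every orbit is dense.'' The first gap sits in your step (i): to move the coordinates $x_1,\dots,x_{n-1}$ you appeal to transitivity of the Nielsen moves among the first $n-1$ letters on the set of dense $(n-1)$-tuples, i.e.\ to the $\aut(F_{n-1})$-action on $\mathrm{Epi}(F_{n-1},G)$. No such statement is proved anywhere, and for $n-1=2$ and $G=\SL_2$ it is false: $\rho\mapsto\mathrm{trace}(\rho([x_1,x_2]))$ is a continuous nonconstant $\aut(F_2)$-invariant, so that action is not even topologically transitive, and your scheme already breaks for $n=3$. (Quoting the ergodicity theorem instead would be circular: in this paper ergodicity is deduced \emph{from} minimality.)

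The paper fills exactly this hole with a ``linking'' device rather than an induction on $n$. Say $\psi$ links $\varphi$ if for every $k<n$ the mixed tuple $\varphi(x_1),\dots,\varphi(x_{k-1}),\psi(x_{k+1}),\dots,\psi(x_n)$ generates a dense subgroup; the set $\mathcal{L}(\varphi)$ of such $\psi$ is open. The coordinate-by-coordinate approximation you sketch for the last coordinate is then run \emph{downward} from $x_n$ to $x_1$: at stage $k$ one multiplies $x_k$ by words in the \emph{other} generators --- crucially including the already-adjusted later coordinates, not only the free factor $A$ --- and density of the relevant mixed subgroup, which is exactly the linking condition, is what allows $\phi(x_k)$ to be steered toward $\psi(x_k)$ while the earlier coordinates stay fixed. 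This shows $\mathcal{L}(\phi)\subset\overline{\aut(F_n)\cdot\phi}$. The intersection lemmas of Section 3 (Lemma \ref{lem:2-intersect}, applied recursively coordinate by coordinate) then produce a single $\psi$ linking both $\phi$ and a point $\phi'$ of the target open set $U$, so $\mathcal{L}(\phi)\cap\mathcal{L}(\phi')$ is a nonempty open set contained in \emph{both} orbit closures; choosing $\gs$ with $\gs\cdot\phi'\in\mathcal{L}(\phi)\cap\mathcal{L}(\phi')$ and then $\gt$ with $\gt\cdot\phi\in\mathcal{L}(\phi)\cap\mathcal{L}(\phi')\cap\gs\cdot U$ gives $\gs^{-1}\gt\cdot\phi\in U$. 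This common open intermediate set is precisely what replaces your false symmetry claim.
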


\begin{rem}\label{rem:semisimple}
Theorems \ref{thm1} and \ref{thm3} remain true, and the proofs requires only minor changes, when $G$ is a general connected semisimple Lie group (not necessarily simple or linear algebraic). However, when $k$ is nonarchimedean and $G$ has more than one factor (i.e. semisimple but not simple) some parts of our arguments cannot be applied directly.
\end{rem}

In analogy, when $G$ is a finite simple group, a classical result of
Gilman \cite{Gilman} (for $n\ge 4$) and Evans \cite{Evans} (for $n\ge
3$) states that $\aut(F_n)$ acts transitively on
$\mathcal{R}_n(G)$. As a consequence, the well known Weigold
conjecture that $\aut(F_n)$ acts transitively on $\text{Epi}(F_n,G)$,
or equivalently, that the associated product replacement graph is
connected, reduces to the conjecture that every epimorphism is
redundant, i.e. that $\text{Epi}(F_n,G)=\mathcal{R}_n(G)$. Theorems
\ref{thm1} and \ref{thm3} can be thought of as locally compact analogs
of the Gilman--Evans theorem where instead of groups such as $\SL_n(\BF_q)$ we consider $\SL_n(\BR)$ and $\SL_n(\BQ_p)$.


\section{Remarks about dense subgroups}\label{sec:dense}
In this section we form some basic results about dense subgroups that are relevant in the proofs of the main results.

Let $k$ be a local field (i.e. $\BR,\BC$, a finite extension of $\BQ_p$ for some rational prime $p$, or the field $\BF_q((t))$ of Laurent series over a finite field), $\BG$ a Zariski connected simple algebraic group defined over $k$ and $G=\BG(k)$ the group of $k$ rational points. In case $k$ is archimedean (i.e. $\BR$ or $\BC$), $G$ is a connected real analytic Lie group, and in case $k$ is a finite extension of $\BQ_p$, $G$ is a $p$-adic analytic group. In the non-archimedean case, we also suppose that $\BG$ is simply connected. We denote by $\mathfrak{g}$ the Lie algebra of $G$, and by $\mathcal{A}$ the simple associative algebra spanned by the image of the adjoint representation $\Ad:G\to\aut(\mathfrak{g})$. In the positive characteristic case, it is not always true that the representation $\text{Ad}$ is irreducible, but we will restrict ourselves to that case, thus by Burnside's theorem $\mathcal{A}=\text{End}(\mathfrak{g})$.

Let us first formulate some simple useful criterions for a subgroup of $G$ to be dense:  

\medskip

\noindent{\bf An archimedean density criterion ($k=\BR$ or $\BC$)}: 
{\it A subgroup $\gC\le G$ is dense iff it is nondiscrete and $\Ad(\gC)$ generates $\mathcal{A}$}.

\medskip

The implication $\Rightarrow$ is obvious. For the other direction,
denote $H=\overline{\gC}$  and $\mathfrak{h}=\text{Lie}(H)$ its Lie algebra. One sees that $\mathfrak{h}$ is an ideal of $\mathfrak{g}$ (being $\Ad(\gC)$ invariant) of positive dimension (since $H$ is nondiscrete). As $\mathfrak{g}$ is simple, it follows that $\mathfrak{h}=\mathfrak{g}$ and hence $H=G$.
\qed

\medskip 

\noindent{\bf A nonarchimedean density criterion ($k$ is totally disconnected)}: 
{\it A subgroup $\gC\le G$ is dense iff it is nondiscrete, unbounded, $\Ad(\gC)$ generates $\mathcal{A}$, and the entries of $\gC$ are not contained in a proper closed subfield of $k$}.

\medskip

To explain the nontrivial implication $\Leftarrow$, let us again
denote $H=\overline{\gC}$  and $\mathfrak{h}=\text{Lie}(H)$. As in the
archimedean case, $\mathfrak{h}$ is the full Lie algebra of $G$. It
follows that $\dim H=\dim G$ and hence $H$ is Zariski dense. We claim furthermore that $H$ is open. This is a consequence of the following criterion of R. Pink \cite{Pink}:

\begin{lem}[Pink's criterion]
A compact subgroup of $G$ is open iff it is Zariski dense and not contained in $\BG(k')$ for a proper closed subfield $k'$.
\end{lem}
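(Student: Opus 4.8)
The plan is to prove the nontrivial implication, disposing of the other in passing. \emph{The easy direction.} If $H\le G$ is open, it contains a nonempty $k$-analytic open subset of $G=\BG(k)$, which is Zariski dense in $\BG$ since $\BG$ is $k$-irreducible (a proper Zariski-closed subset has empty interior in the $k$-topology); hence $H$ is Zariski dense. Moreover a proper closed subfield $k'\subsetneq k$ is nowhere dense in $k$ --- were it to have interior it would be an open additive subgroup, hence, being a subfield, all of $k$ --- so $\BG(k')$ (for any $k'$-model of $\BG$) is nowhere dense in $\BG(k)$ and contains no nonempty open subgroup; thus an open $H$ is not contained in any such $\BG(k')$.

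\emph{The substantive direction} --- that a compact, Zariski-dense $H\le G$ lying in no $\BG(k')$ with $k'$ a proper closed subfield is open --- is Pink's theorem, whose architecture I would follow. \textbf{Step 1 (reductions).} The central isogeny $\BG\to\BG^{\mathrm{ad}}$ is open with finite kernel, so preserves openness of subgroups in both directions, reducing to $\BG$ adjoint; writing a $k$-simple adjoint $\BG$ as $R_{\ell/k}\BG_0$ with $\BG_0$ absolutely simple adjoint over a finite separable $\ell/k$ and using $\BG(k)=\BG_0(\ell)$ reduces to $\BG_0$, the translation of hypotheses amounting to comparing closed subfields of $\ell$ with those of $k$. \textbf{Step 2 (a Lie algebra over a subfield).} The crux is to attach to $H$ a Lie subalgebra $\mathfrak h\subseteq\mathfrak g$ defined over a closed subfield $E\subseteq k$ with $\mathfrak h\otimes_E k=\mathfrak g$. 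For $k$ archimedean this is elementary: $H$ is a compact Lie subgroup, $\mathfrak h=\text{Lie}(H)$, and analysing the real Zariski closure of $H$ in $R_{\BC/\BR}\BG$ (using that $\BG$ is simple and $\Ad(H)$ generates $\mathcal A$) shows $\mathfrak h$ is an $\BR$- or $\BC$-form of $\mathfrak g$. For $k$ non-archimedean of characteristic $0$, $H$ is a compact $p$-adic analytic group and, by Lazard's theory, has an open uniform pro-$p$ subgroup with an associated $\BZ_p$-Lie lattice; one isolates the smallest closed subfield $E\subseteq k$ over which the $\BQ_p$-Lie algebra $\text{Lie}(H)$ is defined and takes $\mathfrak h=\text{Lie}(H)$ with that $E$-structure. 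For $\text{char}(k)=p>0$ there is no logarithm, and one uses Pink's substitute --- a limiting construction along the congruence filtration of $H$ producing elements of $\mathfrak g$ whose $E$-span (for a suitable closed $E$) is $\mathfrak h$. In every case Zariski density forces $\mathfrak h$ to be an absolutely simple $E$-form of $\mathfrak g$, not a proper subalgebra. \textbf{Step 3 (back to the group).} Then $\mathfrak h=\text{Lie}_E(\BG')$ for a unique adjoint absolutely simple $E$-group $\BG'$ with $\BG'_k\cong\BG$; one checks $H\subseteq\BG'(E)$ and that $H$ is open in the $E$-analytic group $\BG'(E)$, since a compact subgroup of it whose tangent algebra is all of $\text{Lie}_E(\BG')$ contains a neighbourhood of the identity (via $\exp$, or its positive-characteristic analogue). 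If $E$ were a proper closed subfield this would place $H$ inside $\BG'(E)$, contrary to hypothesis; hence $E=k$, $\BG'=\BG$, and $H$ is open in $G$.

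\emph{Main obstacle.} All of this except the construction of $E$ and $\mathfrak h$ is comparatively routine; the difficulty is the non-archimedean case, above all characteristic $p$, where in the absence of a logarithm one must build $\mathfrak h$ by a delicate limiting process on the congruence filtration of $H$ and then show that Zariski density genuinely forces $\mathfrak h\otimes_E k=\mathfrak g$ (equivalently that $\BG'$ has the right dimension). It is precisely here that the standing hypothesis of this section --- that $\Ad$ is irreducible, so that $\mathcal A=\text{End}(\mathfrak g)$ by Burnside --- comes in: it controls the associative algebra generated by $\Ad(H)$ and thereby guarantees that the limiting Lie algebra is a full form of $\mathfrak g$ rather than something smaller.
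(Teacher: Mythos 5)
The paper does not prove this lemma at all: it is stated as a known criterion of R.~Pink and used as a black box, with a citation to his 1998 \emph{J. Algebra} paper, so there is no internal proof to compare yours against. Your easy direction is complete and correct: an open subgroup of $\BG(k)$ is Zariski dense because $\BG(k)$ is Zariski dense in the connected group $\BG$ and a proper Zariski-closed subset has empty $k$-analytic interior, and a proper closed subfield of a local field is nowhere dense (an open subfield would contain a ball about $0$ and hence, by inverting a small element and scaling, all of $k$), so $\BG(k')$ contains no open subgroup. For the substantive direction, however, what you have written is an accurate table of contents for Pink's paper rather than a proof. The genuine mathematical content is concentrated in your Steps 2 and 3: the construction, for an arbitrary compact Zariski-dense $H$, of the minimal closed subfield $E\subseteq k$ and the $E$-structure $\mathfrak{h}$ on the Lie-algebra-like invariant of $H$; the proof that Zariski density forces $\mathfrak{h}\otimes_E k=\mathfrak{g}$; and the passage back from the Lie algebra to an open embedding of $H$ into $\BG'(E)$. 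In residue characteristic $p$, and above all in characteristic $p$ itself, each of these steps is delicate: there is no logarithm, the substitute for $\text{Lie}(H)$ must be built by Pink's limiting construction along the congruence filtration, and the model $\BG'$ is related to $\BG$ only by a possibly inseparable isogeny, which is why Pink's actual statement is phrased in terms of quasi-models rather than literal subgroups $\BG(k')$. None of this is carried out in your sketch; it is named and deferred, so as a proof it has a genuine gap exactly where the theorem is hard.

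One further caution: your closing claim that the standing hypothesis $\mathcal{A}=\text{End}(\mathfrak{g})$ is precisely what rescues the limiting construction misplaces the role of that hypothesis. Pink's theorem is proved without it; in this paper the irreducibility of $\Ad$ in positive characteristic is assumed so that Burnside's theorem can be invoked in the density criteria \emph{surrounding} the lemma, not inside the proof of the lemma itself. In short, your outline correctly locates the result in the literature, handles the trivial direction fully, and identifies where the difficulty lies, but the hard direction should be presented as the citation it is in the paper, not as a proof.
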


Let $U$ be an open compact subgroup of $G$ and consider the compact group $H\cap U$. It is well known that $H\cap U$ is Zariski dense in $H$ and $\text{Lie}(H\cap U)=\text{Lie}(H)$ (see \cite[Lemma 3.2]{Pla-Rap}). Moreover since $H\nleqslant\BG(k')$ for every closed subfield $k'<k$ while the adjoint representation is defined over the prime field, we deduce that $\Ad(H)$ is not contained in $\Ad(\BG)(k')$, and since, for $h\in H$, $\Ad(h)$ is determined by the restriction of $i_h:g\mapsto hgh^{-1}$ to the open (Zariski dense) set $H\cap U\cap h^{-1}Uh$, we deduce that $H\cap U\nleqslant\BG(k')$ (see \cite{Glasner} or \cite{Shalom} for more details). We deduce from Pink's criterion that $H$ is open.

Finally, the density criterion follows from the following result of Tits \cite{Prasad}:

\begin{lem}\label{lem:Howe-Moore}
If $H\le G$ is open and unbounded then $H=G$.
\end{lem}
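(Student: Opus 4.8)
The plan is to prove this via the Howe--Moore property of $G$, as the label of the lemma suggests. Two reductions come first. If $\BG$ is $k$-anisotropic then $G$ is compact, so $H$ cannot be unbounded and there is nothing to prove; and if $k$ is archimedean then $G$ is connected, so an open subgroup is automatically closed and hence equals $G$ (here unboundedness is not even needed). Thus assume $k$ is nonarchimedean and $\BG$ is $k$-isotropic; in particular $G$ has positive $k$-rank.

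The first real step is to show $[G:H]<\infty$. Since $H$ is open, $G/H$ is a countable discrete $G$-set on which $G$ acts transitively; let $\pi$ be the associated quasi-regular unitary representation of $G$ on $\ell^2(G/H)$. If $[G:H]=\infty$, then $\pi$ has no nonzero $G$-invariant vector: such a vector would be a function on $G/H$ constant along the unique $G$-orbit, hence a nonzero constant, which does not lie in $\ell^2$ of an infinite set. By the Howe--Moore theorem (applicable since $G=\BG(k)$ is simple of positive $k$-rank, its finite center causing no trouble), every matrix coefficient of $\pi$ lies in $C_0(G)$. Apply this to the unit vector $\delta_{eH}$: since $\langle\pi(g)\delta_{eH},\delta_{eH}\rangle=\langle\delta_{gH},\delta_{eH}\rangle$ equals $1$ for $g\in H$ and $0$ otherwise, this coefficient is precisely the indicator function $\mathbf{1}_H$. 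But $\mathbf{1}_H\in C_0(G)$ forces $\{g:\mathbf{1}_H(g)\ge\tfrac12\}=H$ to be relatively compact; as $H$ is open it is also closed, so $H$ would be compact, contradicting unboundedness. Hence $[G:H]<\infty$.

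Finally one upgrades finite index to equality. Passing to the normal core $N=\bigcap_{g\in G}gHg^{-1}$ yields an open normal subgroup of finite index, so $G$ surjects onto the finite group $G/N$. But $G$ is generated by its unipotent one-parameter subgroups (a theorem of Tits; see \cite{Prasad}), and each of these is isomorphic to the additive group of $k$, which is divisible and hence has trivial image in any finite group. Therefore $G/N$ is trivial, $N=G$, and a fortiori $H=G$.

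The substantive inputs are the Howe--Moore property of $G$ --- including the rank-one groups $\SL_2(\BR)$, $\SL_2(\BC)$ and $\SL_2$ over nonarchimedean fields, where it is classical --- and the structural fact (Tits) that a simple group over a local field is generated by unipotents, so has no proper open subgroup of finite index; the rest is formal. The finite-index step is where genuine structure theory is needed, and is the main place the hypotheses on $\BG$ (in particular in positive characteristic) enter; alternatively, the ergodic-theoretic form of Howe--Moore (the Mautner phenomenon) could replace the explicit computation in the first step, but identifying the matrix coefficient with $\mathbf{1}_H$ seems the cleanest route.
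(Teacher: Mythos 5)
Your main step coincides with the paper's: both proofs apply Howe--Moore to the quasi-regular representation on $\ell^2(G/H)$, and your identification of the matrix coefficient of $\delta_{eH}$ with $\mathbf{1}_H$ is just an explicit form of the paper's observation that an unbounded subgroup fixing a unit vector contradicts decay of matrix coefficients. Where you diverge is the finite-index case: the paper disposes of it inside the same argument by replacing $\ell^2(G/H)$ with the codimension-one subspace $\ell^2(G/H)^0$ of mean-zero functions (where $\delta_{eH}$ still has a nonzero $H$-fixed projection but $G$ has no invariant vectors), whereas you pass to the normal core and invoke generation by unipotents. Your route has the merit of making explicit the structural input ($G=G^+$, Kneser--Tits/Platonov) that the appeal to Howe--Moore uses silently; indeed the lemma as literally stated fails for, say, $\mathrm{PGL}_2(\BQ_p)$, where $\mathrm{PSL}_2(\BQ_p)$ is open, unbounded and proper, and both proofs tacitly assume this situation is excluded.

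However, your last step has a genuine gap in positive characteristic. You argue that a unipotent one-parameter subgroup is isomorphic to $(k,+)$, which is divisible and hence maps trivially to any finite group. Divisibility fails when $\mathrm{char}(k)=p>0$: there $(k,+)$ is an elementary abelian $p$-group ($px=0$ for all $x$), and it does admit proper open finite-index subgroups (pull back a finite quotient of the infinite discrete group $k/t^N\mathcal{O}_k$). Since the paper explicitly allows $k=\BF_q((t))$, this case cannot be dropped. The conclusion you need --- that $G$ has no proper open subgroup of finite index --- is still true, but for a different reason: the normal core $N$ is open, hence infinite and noncentral (the center being finite), so by Tits' simplicity theorem every noncentral normal subgroup contains $G^+=G$. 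With that substitution, or in characteristic zero as written, your argument is correct.
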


For the reader's convenience we include a proof of Lemma
\ref{lem:Howe-Moore} (we believe this proof appears somewhere in the
literature, but we are not aware of the correct source). Consider the
unitary representation of $G$ on the separable Hilbert space
$l_2(G/H)$ (or $l_2(G/H)^0$ if $|G/H|<\infty$) arising from the left
action of $G$ on $G/H$. Clearly, there are no nonzero invariant
vectors. However, if $[G:H]>1$, the unbounded subgroup $H$ admits a nontrivial invariant unit vector, in contrast to the Howe-Moore theorem. Hence $G=H$.
\qed

\medskip

Here is another basic result:

\begin{prop}
The set $\text{Epi}(F_n,G)=\{ f\in\Hom(F_n,G): f(F_n)~\text{is dense in}~G\}$ is open in $\Hom(F_n,G)$, and nonempty provided $n\ge 2$.
\end{prop}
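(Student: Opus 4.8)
The plan is to prove openness and nonemptiness separately, leaning on the density criteria established above.

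\textbf{Openness.} First I would show that $\text{Epi}(F_n,G)$ is open in $\Hom(F_n,G)\cong G^n$. Given $f\in\text{Epi}(F_n,G)$, write $\gC=f(F_n)$, which is dense. Using the relevant density criterion (archimedean or nonarchimedean), $\gC$ satisfies finitely many ``open'' conditions: it is nondiscrete, it is unbounded in the nonarchimedean case, $\Ad(\gC)$ spans $\mathcal{A}=\text{End}(\mathfrak{g})$, and the entries are not contained in a proper closed subfield. The key point is that each of these is witnessed by finitely many elements of $\gC$ — hence by finitely many words $w_1,\ldots,w_m\in F_n$ — and persists under small perturbation of $f$. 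Concretely: spanning $\mathcal{A}$ means some finite collection $\Ad(f(w_1)),\ldots,\Ad(f(w_m))$ is a basis for the vector space $\text{End}(\mathfrak{g})$, which is an open condition on the tuple $(f(x_1),\ldots,f(x_n))$ since $\Ad$ and word maps are continuous and ``being a basis'' is the non-vanishing of a determinant. Nondiscreteness is handled by exhibiting a word $w$ with $f(w)\ne e$ arbitrarily close to $e$; more carefully, one shows that once $\Ad(\gC)$ spans $\mathcal{A}$, the closure $H=\overline{\gC}$ has full Lie algebra, so $H$ is open in the archimedean case and (via Pink's criterion together with unboundedness and Lemma \ref{lem:Howe-Moore}) equals $G$ in general — and these consequences are stable under perturbing $f$ among tuples still satisfying the finite witnessing conditions. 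In the nonarchimedean case, unboundedness is witnessed by a single word $w$ with $\|f(w)\|$ large, which is open; and the subfield condition is witnessed by finitely many matrix entries of finitely many $f(w_j)$ that together fail to lie in any proper closed subfield, again an open condition since proper closed subfields form a ``small'' family (one only needs to avoid the finitely many maximal ones, or argue directly that generating $k$ topologically is open). Assembling: for $f$ in a suitable neighborhood, all the criterion hypotheses hold for $f'(F_n)$, hence $f'$ is an epimorphism.

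\textbf{Nonemptiness for $n\ge 2$.} Here it suffices to produce a single dense two-generator subgroup, since $F_2$ is a quotient of $F_n$ (kill the extra generators) and more simply any $f\in\text{Epi}(F_2,G)$ extends to $\text{Epi}(F_n,G)$ by sending $x_3,\ldots,x_n$ anywhere. So I would exhibit two elements of $G$ generating a dense subgroup. Since $G=\BG(k)$ with $\BG$ Zariski connected simple, $G$ is not virtually solvable, and a standard argument (e.g. via the Tits alternative, or directly) gives a Zariski-dense free subgroup on two generators; in fact a generic pair $(g_1,g_2)\in G^2$ generates a Zariski-dense subgroup. To upgrade Zariski density to topological density one checks the criterion: a Zariski-dense subgroup automatically has $\Ad(\gC)$ spanning $\mathcal{A}=\text{End}(\mathfrak{g})$ (Zariski density of $\Ad(\gC)$ in $\Ad(\BG)$ together with Burnside, since the adjoint representation is irreducible), and one can choose the generators to be nondiscrete-forcing and, in the nonarchimedean case, unbounded and with entries generating $k$ — for instance take $g_1$ to be a ``very unipotent-like'' element or use an element of infinite order with entries involving a topological generator of $k$, and note that the set of such good pairs is Zariski-open-and-nonempty intersected with the (open, nonempty by the first part's logic) perturbation neighborhoods. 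Alternatively, and perhaps cleanest: the first part shows $\text{Epi}(F_n,G)$ is open, and a measure-theoretic or Baire-category argument shows a Zariski-generic tuple lands in it once one knows the closure of a Zariski-dense nondiscrete unbounded subgroup is all of $G$.

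\textbf{Main obstacle.} I expect the routine part is the openness via continuity of word maps and $\Ad$; the genuinely delicate point is the nonarchimedean subfield condition — ensuring both that ``entries generate $k$ (equivalently, are not in a proper closed subfield)'' is an open condition and that it can be arranged simultaneously with Zariski density and unboundedness. One clean way: closed subfields of $k$ of infinite index are themselves locally compact of smaller ``size,'' and the condition of generating $k$ is open because it fails only on a closed nowhere-dense set; combined with the fact (used already above in applying Pink's criterion) that a Zariski-dense, unbounded, nondiscrete subgroup not contained in $\BG(k')$ is already all of $G$, this closes the argument. The nonemptiness then follows by producing one explicit good pair, which in the $\SL_2$-type intuition is easy (two generic elements), and in general follows from the Tits alternative plus the observation that the constraints carve out a nonempty Zariski-open subset of $G^2$.
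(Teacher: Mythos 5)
There is a genuine gap in your openness argument. Your strategy is to check that each hypothesis of the density criterion is an open condition on $f$, but one of those hypotheses --- nondiscreteness of $f(F_n)$ --- is \emph{not} witnessed by finitely many elements of the group and is not an open condition in any obvious way: exhibiting nontrivial elements ``arbitrarily close to $e$'' requires infinitely many words, and a small perturbation of $f$ can a priori destroy all of them. Your attempted repair, ``once $\Ad(\gC)$ spans $\mathcal{A}$, the closure $H=\overline{\gC}$ has full Lie algebra,'' is simply false: a lattice, or a Schottky subgroup, is Zariski dense (so $\Ad$ of it spans $\mathcal{A}$) yet discrete, so $\overline{\gC}=\gC$ has trivial Lie algebra. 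The deduction ``$\mathfrak{h}$ is a nonzero $\Ad(\gC)$-invariant ideal, hence all of $\mathfrak{g}$'' in the paper's criterion needs nondiscreteness as an input, and your proposal never supplies an open condition that forces it. This is exactly the point where the paper does something different: in the nonarchimedean case it replaces the criterion-checking approach entirely, using a compact open finitely generated pro-$p$ subgroup $K$ whose Frattini subgroup is open, so that density of $\overline{f'(F_n)}\cap K$ in $K$ is equivalent to meeting finitely many open cosets --- a genuinely finite, open condition --- and then combines openness of the closure with unboundedness (open, via a nonelliptic word) and Lemma \ref{lem:Howe-Moore}. In the archimedean case the paper cites the literature, where the standard fix is a Zassenhaus-neighborhood argument (which in fact appears later in this paper, in the lemma showing $\gO(g_1,\ldots,g_k)\supset U\setminus X$): a nontrivial element in a Zassenhaus neighborhood together with the $\Ad$-spanning condition \emph{forces} nondiscreteness, and that combination is open. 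Your proof needs one of these two mechanisms; as written it is circular at the nondiscreteness step.

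Your nonemptiness argument is closer to the paper's (Zariski-open nonempty set of pairs with $\Ad$ generating $\mathcal{A}$, then adjust the pair to meet the remaining conditions), though you leave vague precisely how nondiscreteness is arranged; the paper does this by taking one generator elliptic of infinite order (its closure is an infinite compact, hence nondiscrete, group) and the other nonelliptic for unboundedness, and handles the subfield condition by noting there are only finitely many local fields intermediate between the field generated by the first element's entries and $k$, which can be avoided by a small deformation of the second generator. I would also flag your claim that the subfield condition ``fails only on a closed nowhere-dense set'': in positive characteristic $k=\BF_q((t))$ has infinitely many proper closed subfields, so the union of the corresponding $\BG(k')$ need not be closed; the paper's finiteness-of-intermediate-fields device is what makes this step work.
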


\begin{proof}
This well known when $k$ is archimedean (see \cite{Ku1,GZ,BG1}). 

Suppose that $k$ is nonarchimedean, and let $f:F_n\to G$ be a homomorphism with dense image.
Since the set of nonelliptic elements in $G$ is open,
 $f(F_n)$, as well as $f'(F_n)$ for any $f'\in\text{Hom}(F_n,G)$ sufficiently close to $f$, contains a nonelliptic element and is hence unbounded. Moreover, $G$ admits an open finitely generated pro-$p$ group $K$ (see \cite{Bar-Lar}). It follows that the Frattini subgroup $F$ of $K$ is open. A subgroup of $K$ is dense in $K$ iff it intersects each of the finitely many open cosets of $F$ in $K$. This is clearly an open condition. This shows that $\overline{f'(F_n)}$ for any $f'$ sufficiently close to $f$ is open and unbounded. By Lemma \ref{lem:Howe-Moore}, any such $f'$ has dense image.
Hence $\text{Epi}(F_n,G)$ is open.

To show the second statement, we have to produce a $2$ generated dense subgroup of $G$. 
First note that since the associative algebra $\mathcal{A}$ is finite dimensional, the set 
$$
 \{(a,b):\Ad(a),\Ad(b)~\text{generates}~\mathcal{A}\}
$$
is Zariski open in $G^2$, and since $G$ admits a $2$-generated open subgroup (see \cite{Bar-Lar})
it is nonempty.
Pick $(a,b)$ in this set such that $a$ is elliptic of infinite order and $b$ is nonelliptic. The closed field $k'$ generated by the entries of $a$ is a local subfield of $k$. There are only finitely many intermediate fields between $k'$ and $k$, hence, slightly deforming $b$ if necessary, we may assume that its entries are not contained in any of these intermediate fields. By the nonarchimedean density criterion sited above, $\langle a,b\rangle$ is dense. 
\end{proof}

As an immediate corollary we have:

\begin{cor}\label{cor:R-open}
The set $\mathcal{R}_n(G)$ of redundant representations of $F_n$ in $G$ is also open in $\Hom(F_n,G)$. Moreover $\mathcal{R}_n(G)\ne\emptyset$ provided $n\ge 3$.
\end{cor}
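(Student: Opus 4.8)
The plan is to derive both statements straight from the preceding Proposition, using two facts: redundancy of a representation is witnessed on a single corank-one free factor, and $\aut(F_n)$ acts on $\Hom(F_n,G)$ by homeomorphisms. First I would reformulate redundancy concretely: a representation $\rho:F_n\to G$ is redundant if and only if there is a free basis $y_1,\dots,y_n$ of $F_n$ with $\rho(\langle y_1,\dots,y_{n-1}\rangle)$ dense in $G$. The nontrivial direction uses the standard fact that every proper free factor $A$ of $F_n$ is contained in a free factor $A'$ of corank one; since $\rho(A')\supseteq\rho(A)$ is then also dense, writing $F_n=A'*\langle b\rangle$ and taking a basis of $A'$ together with $b$ produces the required basis.

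For openness, fix a free basis $x_1,\dots,x_n$, set $A_0=\langle x_1,\dots,x_{n-1}\rangle$ and $\mathcal{E}=\{\rho\in\Hom(F_n,G):\overline{\rho(A_0)}=G\}$. Under the identification $\Hom(F_n,G)\cong G^n$ via $\rho\mapsto(\rho(x_1),\dots,\rho(x_n))$, the subset $\mathcal{E}$ is identified with $\text{Epi}(F_{n-1},G)\times G$, which is open by the Proposition (applied to $F_{n-1}$). For $\phi\in\aut(F_n)$ the induced map $\phi^{*}:\rho\mapsto\rho\circ\phi$ is, in these coordinates, given by word maps whose inverse is again a word map, hence a self-homeomorphism of $\Hom(F_n,G)$. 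Since any free basis of $F_n$ equals $\phi(x_1),\dots,\phi(x_n)$ for a unique $\phi\in\aut(F_n)$, and $\langle\phi(x_1),\dots,\phi(x_{n-1})\rangle=\phi(A_0)$, the reformulation above gives
$$
\mathcal{R}_n(G)=\bigcup_{\phi\in\aut(F_n)}\{\rho:\overline{(\rho\circ\phi)(A_0)}=G\}=\bigcup_{\phi\in\aut(F_n)}(\phi^{*})^{-1}(\mathcal{E}),
$$
a union of open sets, hence open.

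For nonemptiness, the Proposition yields a point $(g_1,\dots,g_{n-1})\in\text{Epi}(F_{n-1},G)$ as soon as $n-1\ge 2$, i.e. $n\ge 3$; completing it by an arbitrary $g_n\in G$ gives a representation lying in $\mathcal{E}\subseteq\mathcal{R}_n(G)$. I do not expect any genuine obstacle here; the only point that needs care is the free-group fact that an arbitrary proper free factor enlarges to one of corank one, which is exactly what lets one pass from the abstract definition of redundancy to the coordinate description of $\mathcal{R}_n(G)$ used above.
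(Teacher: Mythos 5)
Your proof is correct and follows exactly the route the paper intends: the paper gives no argument at all, stating the corollary as an ``immediate'' consequence of the proposition that $\text{Epi}(F_n,G)$ is open and nonempty for $n\ge 2$, and your writeup (expressing $\mathcal{R}_n(G)$ as a union over $\aut(F_n)$ of pullbacks of $\text{Epi}(F_{n-1},G)\times G$, plus the corank-one reduction already noted in the paper's introduction) supplies precisely the details that make it immediate. No gaps.
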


For a finite collection of elements $g_1,\ldots,g_k\in G$ we define $\gO(g_1,\ldots,g_k)$ to be the set of elements $g$ in $G$ that together with $g_1,\ldots,g_k$ generates a dense subgroup of $G$:
$$
 \gO(g_1,\ldots,g_k):=\{g\in G:\langle g_1,\ldots,g_k,g\rangle~\text{is dense in}~G\}.
$$
Then $\gO(g_1,\ldots,g_k)$ is an open (possibly empty) subset of $G$.
We will sometimes abuse notation and write $\gO(S)$ for $\gO(g_1,\ldots,g_k)$ when $S$ is the set $S=\{g_1,\ldots,g_k\})$.
We will need the data that (under certain conditions) sets of this form intersect each other. In the archimedean case this will follow from:

\begin{lem}
Suppose $k$ is archimedean. If $\gO(g_1,\ldots,g_k)$ is nonempty, then there is an identity neighborhood $U$ in $G$, and a proper algebraic subvariety $X\subset G$ such that $\gO(g_1,\ldots,g_k)$ contains $U\setminus X$.
\end{lem}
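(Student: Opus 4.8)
I would look for elements of $\langle g_1,\dots,g_k,g\rangle$ which, for $g$ near $e$, are automatically near $e$ and which, for $g$ outside a proper subvariety, still generate a dense subgroup; the archimedean density criterion then does the rest.

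\textbf{Step 1: a dense normal subgroup.} Fix $g_0\in\gO(g_1,\dots,g_k)$. Since $\gO(g_1,\dots,g_k)$ is open and nonempty while $Z(G)$ is finite, I may take $g_0\notin Z(G)$. Set $L=\langle g_1,\dots,g_k\rangle$, $\Gamma_0=\langle L,g_0\rangle$ (dense by hypothesis), and let $N\trianglelefteq\Gamma_0$ be the normal closure of $g_0$. Then $\overline N$ is a closed normal subgroup of $\overline{\Gamma_0}=G$; it is not discrete, since $g_0\in\overline N$ is non-central, so $\operatorname{Lie}(\overline N)$ is a nonzero ideal of $\mathfrak g$, hence $\operatorname{Lie}(\overline N)=\mathfrak g$ and $\overline N=G$. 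Thus $N$ is dense in $G$. Moreover, since the normal closure of $x_0$ in $F_{k+1}=\langle x_1,\dots,x_k,x_0\rangle$ is generated by the conjugates $v\,x_0^{\pm 1}\,v^{-1}$ with $v\in\langle x_1,\dots,x_k\rangle$, every element of $N$ can be written as $\prod_j h_j g_0^{\varepsilon_j}h_j^{-1}$ with $h_j\in L$, $\varepsilon_j\in\{\pm1\}$.

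\textbf{Step 2: small elements and the subvariety.} Let $W$ be a small neighborhood of $e$, chosen in particular to be a Zassenhaus neighborhood: any subgroup of $G$ generated by a subset of $W$ is, if discrete, contained in a connected nilpotent Lie subgroup. Using density of $N$, choose $\gamma_1,\dots,\gamma_r\in N\cap W$ whose logarithms span $\mathfrak g$ (one may take $r=\dim\mathfrak g$). Writing $\gamma_i=\prod_j h_{ij}g_0^{\varepsilon_{ij}}h_{ij}^{-1}$ as in Step 1, define $\Phi_i\colon G\to G$ by $\Phi_i(g)=\prod_j h_{ij}g^{\varepsilon_{ij}}h_{ij}^{-1}$. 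Each $\Phi_i$ is a morphism with $\Phi_i(e)=e$ and $\Phi_i(g_0)=\gamma_i$, and $\Phi_i(g)\in\langle g_1,\dots,g_k,g\rangle$ for all $g$. Then I would fix a neighborhood $U$ of $e$ with $\Phi_i(U)\subseteq W$ for every $i$, and let $X\subseteq G$ be the set of $g$ at which $\Ad(\Phi_1(g)),\dots,\Ad(\Phi_r(g))$ do not generate the associative algebra $\mathcal A$. Since $\mathcal A$ is finite dimensional, this is the simultaneous vanishing of finitely many fixed minors with entries polynomial in $g$, so $X$ is Zariski closed; it is proper because $g_0\notin X$ — the $\gamma_i=\Phi_i(g_0)$ lie near $e$ with logarithms spanning $\mathfrak g$, so the algebra generated by the $\Ad(\gamma_i)=\exp(\operatorname{ad}\log\gamma_i)$ contains every $\operatorname{ad}\log\gamma_i$ (an $\mathbb R$-linear combination of powers of $\Ad(\gamma_i)$, the point being that $\gamma_i$ is close to $e$), hence all of $\operatorname{ad}\mathfrak g$, hence equals $\mathcal A$.

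\textbf{Step 3: conclusion.} For $g\in U\setminus X$, consider $\Gamma(g)=\langle\Phi_1(g),\dots,\Phi_r(g)\rangle\le\langle g_1,\dots,g_k,g\rangle$. By the definition of $X$, $\Ad(\Gamma(g))$ generates $\mathcal A$; in particular $\Gamma(g)\ne\{e\}$ since $\dim\mathfrak g>1$. If $\Gamma(g)$ were discrete, then, all $\Phi_i(g)$ lying in $W$, it would be contained in a connected nilpotent Lie subgroup $H$ with $0\ne\operatorname{Lie}(H)\subsetneq\mathfrak g$ (proper because the simple $\mathfrak g$ is not nilpotent); but $\operatorname{Lie}(H)$ is $\Ad(\Gamma(g))$-invariant, contradicting that $\Ad(\Gamma(g))$ generates $\mathcal A$. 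Hence $\Gamma(g)$ is non-discrete, and by the archimedean density criterion $\Gamma(g)$ — so also $\langle g_1,\dots,g_k,g\rangle$ — is dense in $G$. Therefore $U\setminus X\subseteq\gO(g_1,\dots,g_k)$.

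\textbf{Where the difficulty lies.} The crux is Steps 1--2: producing elements of $\langle g_1,\dots,g_k,g\rangle$ that remain near $e$ as $g\to e$. The device is to pass to the normal closure $N$ of $g_0$ — dense precisely because $g_0$ is non-central and $G$ is almost simple — and to realize its elements by words that become trivial when $g_0$ is set to $e$. Non-discreteness is then forced by Zassenhaus's theorem together with the irreducibility of the adjoint action. The second delicate point is getting an \emph{algebraic} exceptional set rather than merely an analytic one; this is arranged by cutting out the good locus with the algebraic condition that the $\Ad(\Phi_i(g))$ generate $\mathcal A$, rather than with any condition phrased through logarithms.
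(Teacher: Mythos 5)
Your proof is correct, and while it shares the paper's overall skeleton --- an algebraic exceptional set cut out by failure of an $\mathcal{A}$-generation condition, a Zassenhaus neighborhood to rule out discreteness, and the archimedean density criterion to finish --- the mechanism by which you produce small elements with full adjoint span is genuinely different. The paper fixes words $W_1,\dots,W_m$ in $g_1,\dots,g_k,g$ whose $\Ad$-images span $\mathcal{A}$ at a witness point, takes $X$ to be the locus where that span degenerates, and obtains non-discreteness by conjugating the small element $g$ itself by words in the $W_i$, then running an iterated argument: the nilpotent Lie algebras $\mathfrak{n}_j$ generated by logarithms of such conjugates of bounded length stabilize to a nontrivial nilpotent ideal, contradicting simplicity. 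You instead pass to the normal closure $N$ of a non-central witness $g_0$ (dense because $G$ is connected and almost simple), and realize finitely many elements of $N$ near the identity by word maps $\Phi_i$ with $\Phi_i(e)=e$; then $\Phi_i(g)$ is automatically small for $g$ near $e$, the generation condition can be imposed directly on the $\Phi_i(g)$, and a single application of Zassenhaus plus the same ideal argument suffices. What your route buys is a cleaner non-discreteness step with no iterated stabilization; what it costs is the normal-closure machinery (the structure of the kernel of $F_k*\BZ\to F_k$, and the fact that $\overline{N}=G$, which uses connectedness of $G$ so that discrete normal subgroups are central). Both arguments are sound; the only points worth spelling out in yours are that connectedness hypothesis and the requirement that $W$ be small enough for the matrix-logarithm series to recover $\operatorname{ad}\log\gamma_i$ from $\Ad(\gamma_i)$, which is what makes $g_0\notin X$ and hence $X$ proper.
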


\begin{proof}
If $\gO(g_1,\ldots,g_k)$ is nonempty, picking $g\in \gO(g_1,\ldots,g_k)$ we may find finitely many words $W_i,~i=1,\ldots,m$ of $k+1$ variables such that $\Ad(W_i(g_1,\ldots,g_k,g)),~i=1,\ldots,m$ spans $\mathcal{A}$ as a vector space. Define 
$$
 X=\{g: \text{span}\{\Ad(W_i(g_1,\ldots,g_k,g)),~i=1,\ldots,m\} \neq\mathcal{A}\}.
$$
Let $V$ be a relatively compact Zassenhaus identity neighborhood of $G$ (see \cite[Chapter 8]{Raghunathan}). Recall that for every finite subset $\mathcal{S}\subset V$ which generates a discrete subgroup, the Lie algebra generated by $\{\log s:s\in \mathcal{S}\}$ is nilpotent. Let $U$ be a sufficiently small identity neighborhood in $G$ such that for any $u\in U,v\in V$ and every word $\mathcal{W}$ in $m$ letters of length $\le \dim G+1$ we have
$$
 \mathcal{W}(W_1(g_1,\ldots,g_k,v),\ldots,W_m(g_1,\ldots,g_k,v))u\mathcal{W}(W_1(g_1,\ldots,g_k,v),\ldots,W_m(g_1,\ldots,g_k,v))^{-1}\in V.
$$
Now if $g\in U\setminus X$ then $\{\Ad(W_i(g_1,\ldots,g_k,g)),~i=1,\ldots,m\}$ generates $\mathcal{A}$. Thus, by the archimedean density criterion, in order to prove that $\langle g_1,\ldots,g_k,g\rangle$ is dense, it is enough to show that it is nondiscrete. Suppose in contrary that it is discrete. Then for every $j\le\dim G+1$ the Lie algebra 
$$
\mathfrak{n}_j= \langle \log (\mathcal{W} g\mathcal{W}^{-1}): \mathcal{W} ~\text{is a word in}~W_i(g_1,\ldots,g_k,g)~\text{of length}~\le j\rangle
$$ 
is nilpotent. But then for some $j\le\dim G$ we have $\mathfrak{n}_j=\mathfrak{n}_{j+1}$ which forces the nontrivial nilpotent Lie algebra $\mathfrak{n}_j$ to be an ideal, since $\Ad(W_i(g_1,\ldots,g_k,g))$ generates $\mathcal{A}$. A contradiction to the simplicity of $\mathfrak{g}$.
\end{proof}

In particular the collection of these sets have the finite intersection property:

\begin{cor}\label{cor:arch-inter}
In the archimedean case, every finite collection of nonempty sets of the form $\gO(g_1,\ldots,g_k)$ have a nonempty intersection.
\end{cor}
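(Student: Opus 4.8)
The plan is to read off the finite intersection property directly from the preceding lemma, which does essentially all the work. Suppose we are given finitely many nonempty sets $\gO(S_1),\dots,\gO(S_m)$, each of the form $\gO(g_1,\dots,g_k)$ (with the list of elements and its length allowed to vary from one set to the next). Applying the lemma to each of them, we obtain identity neighborhoods $U_1,\dots,U_m$ in $G$ and proper algebraic subvarieties $X_1,\dots,X_m\subset G$ with $U_i\setminus X_i\subseteq\gO(S_i)$ for every $i$.

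Next I would intersect everything. Put $U=\bigcap_{i=1}^m U_i$, which is again an open identity neighborhood since the intersection is finite. Then
\[
  \bigcap_{i=1}^m\gO(S_i)\ \supseteq\ \bigcap_{i=1}^m\bigl(U_i\setminus X_i\bigr)\ \supseteq\ U\setminus\bigcup_{i=1}^m X_i ,
\]
so it suffices to check that the set on the right is nonempty. Since $\BG$ is Zariski connected, it is irreducible as a variety, so the finite union $\bigcup_{i=1}^m X_i$ of proper closed subvarieties is again a proper closed subvariety of $G$.

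It remains to observe that the nonempty analytically open set $U$ cannot be contained in any proper subvariety of $G$: near any of its points the group $G=\BG(k)$ is an analytic manifold (real if $k=\BR$, complex if $k=\BC$) of dimension $\dm\BG$, while a proper subvariety has strictly smaller dimension there and hence empty interior (for $k=\BR$ one also invokes the standard fact that $\BG(\BR)$ is Zariski dense in $\BG$). Therefore $U\setminus\bigcup_{i=1}^m X_i\neq\emptyset$, and consequently $\bigcap_{i=1}^m\gO(S_i)\neq\emptyset$. I do not anticipate a real obstacle here; the only point requiring a word of care is precisely this last step — guaranteeing that a nonempty Euclidean-open subset of $\BG(k)$ escapes any proper subvariety — which follows from the smoothness and irreducibility of $\BG$.
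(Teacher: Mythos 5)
Your argument is correct and is exactly the deduction the paper intends: the corollary is stated as an immediate consequence of the preceding lemma (the paper gives no separate proof beyond the phrase ``in particular''), obtained by intersecting the finitely many identity neighborhoods, taking the union of the proper subvarieties, and noting that a nonempty open subset of $G$ cannot lie in a proper subvariety since $\BG$ is Zariski connected (hence irreducible) and $G$ has full dimension. Your treatment of that last point is the right one, so nothing is missing.
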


In the nonarchimedean case we prove a somewhat weaker result:

\begin{lem}\label{lem:nonarch-inter1}
Let $S_j,~j=1,\ldots,r$ be a finite family of finite sets. Assume that $\gO(S_j)$ is nonempty for every $j\le r$ and that the groups $\langle S_j\rangle$ are simultaneously all nondiscrete or unbounded. Then $\cap_{j\le r}\gO(S_j)\ne\emptyset$.
\end{lem}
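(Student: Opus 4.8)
\emph{Plan of proof.} By the nonarchimedean density criterion, for a finite set $S\subseteq G$ the set $\gO(S)$ is precisely the set of $g\in G$ for which $\langle S,g\rangle$ is (i) nondiscrete, (ii) unbounded, (iii) has $\Ad$-image spanning $\mathcal{A}$, and (iv) has matrix entries not contained in a proper closed subfield of $k$. So the task is to produce a single $g$ satisfying (i)--(iv) for $S=S_1,\dots,S_r$ all at once. The plan is to observe that conditions (iii) and (iv) are ``generic'' --- each fails only on a closed set with empty interior --- and then to exploit the dichotomy in the hypothesis to secure (i) and (ii) simultaneously. It is exactly this last point where the clean archimedean statement (cf. the Lemma preceding Corollary~\ref{cor:arch-inter}, where near the identity a Zassenhaus neighbourhood rules out discreteness) has no totally disconnected analogue, and where the extra hypothesis is needed.

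First I would dispose of (iii) and (iv). Fixing some $h_j\in\gO(S_j)$, choose words $W_{j,1},\dots,W_{j,m_j}$ such that $\Ad(W_{j,1}(S_j,h_j)),\dots,\Ad(W_{j,m_j}(S_j,h_j))$ span $\mathcal{A}$; then $A_j:=\{g\in G:\Ad(W_{j,1}(S_j,g)),\dots,\Ad(W_{j,m_j}(S_j,g))\text{ span }\mathcal{A}\}$ is cut out by a Zariski-open condition, so it is open with complement $X_j:=G\setminus A_j$ contained in a proper subvariety (hence $X_j$ is closed with empty interior), and $A_j\ni h_j$ is nonempty; moreover $g\in A_j$ forces (iii) for $S_j$. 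For (iv): the hypothesis forces $\langle S_j\rangle$ to be infinite, so the closed subfield $k_j\subseteq k$ generated by the entries of $S_j$ is infinite, whence $[k:k_j]<\infty$ and there are only finitely many closed fields strictly between $k_j$ and $k$; for each maximal such field $k''$ the set $\BG(k'')\subset G$ is closed with empty interior, and (iv) holds for every $j$ as soon as $g$ avoids the finite union $N$ of all these $\BG(k'')$ (over all $j$). (In characteristic $0$ one has $k_j\supseteq\BQ_p$ automatically; in characteristic $p$ the finiteness of the set of intermediate subfields is the only case-sensitive bookkeeping, and is routine.)

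Now I would use the hypothesis. Suppose first that all $\langle S_j\rangle$ are unbounded; then (ii) is automatic for any $g$. Fix a compact open subgroup $U\le G$. If $g\in U$ has infinite order then $\overline{\langle g\rangle}$ is an infinite compact group, hence nondiscrete, so $\overline{\langle S_j,g\rangle}\supseteq\overline{\langle g\rangle}$ is nondiscrete and (i) holds for every $j$. The set of admissible $g$ is thus the complement in $U$ of the countable union $\bigcup_{m\ge1}\{g:g^m=e\}$ of proper subvarieties, together with the finitely many nowhere-dense closed sets $X_1,\dots,X_r$ and $N$; this is a meager set removed from the nonempty compact (hence Baire) space $U$, so its complement is comeager, in particular nonempty, and any $g$ in it lies in $\bigcap_{j\le r}\gO(S_j)$. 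Suppose instead that all $\langle S_j\rangle$ are nondiscrete; then (i) is automatic. The set $O$ of nonelliptic elements of $G$ is open and nonempty (as $G$ is noncompact --- otherwise $\gO(S_j)=\emptyset$), and for $g\in O$ the cyclic group $\langle g\rangle$ is unbounded, so (ii) holds for every $j$; then $O\setminus(X_1\cup\dots\cup X_r\cup N)$ is a nonempty open set with a closed nowhere-dense set removed, hence nonempty, and any of its elements lies in $\bigcap_{j\le r}\gO(S_j)$.

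The step I expect to be the real obstacle is precisely the one flagged above: arranging that $\langle S_j,g\rangle$ be nondiscrete. Over $\BR$ or $\BC$ this is automatic for $g$ near the identity, which is what makes the stronger Corollary~\ref{cor:arch-inter} possible; over a nonarchimedean field a small $g$ only contributes a compact open subgroup and does not force nilpotence, so one must either inject an infinite-order element --- which works only when $\langle S_j\rangle$ already supplies the unboundedness --- or rely on the $\langle S_j\rangle$ being nondiscrete to begin with. Everything else is soft: the genericity of (iii) and (iv) and the Baire-category counting are routine, the single slightly technical point being the characteristic-$p$ verification that only finitely many intermediate subfields occur.
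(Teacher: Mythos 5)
Your proof is correct and follows essentially the same route as the paper: conditions (iii) and (iv) are handled by avoiding the proper subvarieties $X_j$ and the finitely many $\BG(k_{j,i})$, and the dichotomy in the hypothesis is resolved by choosing $g$ nonelliptic when the $\langle S_j\rangle$ are all nondiscrete and elliptic of infinite order when they are all unbounded. The only difference is cosmetic: you justify the existence of an infinite-order elliptic $g$ outside the bad sets by a Baire-category argument in a compact open subgroup, where the paper simply asserts such a choice.
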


\begin{proof}
The fact that $\gO(S_j)\ne\emptyset$ implies that for all $g$ outside some proper algebraic subvariety $X_j$ the elements $\Ad (s),~s\in S_j\cup\{ g\}$ generate the algebra $\mathcal{A}$. Let $k_j\le k$ be the closed subfield of $k$ generated by the entries of the elements of $S_j$, and let $\{k_{j,i}\}_{i=1}^{n_j}$ be the finite collection of proper local subfields in $k$ containing $k_j$ (if $k_j=k$ this collection is empty).
If all the $\langle S_j\rangle$ are nondiscrete (resp. unbounded) pick 
$$
 g\in G\setminus\big(\cup_{j\le r} X_j\bigcup \cup_{j\le r,i\le n_j}\BG(k_{j,i})\big)
$$ 
nonelliptic (resp. elliptic of infinite order). Then each of the groups $\langle s:s\in S_j\cup\{ g\}\rangle$ satisfies the four condition of the nonarchimedean density criterion, i.e. it
is unbounded, nondiscrete, its image under $\Ad$ generate $\mathcal{A}$ and its entries generate $k$. 
\end{proof}

We will also need:

\begin{lem}\label{lem:2-intersect}
Suppose $S_i,~i=1,2$ are finite sets such that for both $i$, $\gO(S_i)\ne\emptyset$ and each $S_i$ contains a nontorsion element. Then $\gO(S_1)\cap\gO(S_2)\ne\emptyset$.
\end{lem}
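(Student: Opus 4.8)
The plan is to reduce Lemma~\ref{lem:2-intersect} to the preceding intersection lemmas by separating into the archimedean and nonarchimedean cases, and in the nonarchimedean case by further splitting according to whether the groups $\langle S_i\rangle$ are discrete and/or bounded, using the nontorsion hypothesis to repair any mismatch. In the archimedean case there is nothing new to do: Corollary~\ref{cor:arch-inter} already gives that any finite collection of nonempty sets $\gO(S)$ has nonempty common intersection, so in particular $\gO(S_1)\cap\gO(S_2)\neq\emptyset$. So assume from now on that $k$ is nonarchimedean.

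In the nonarchimedean case, recall that Lemma~\ref{lem:nonarch-inter1} already handles the situation in which $\langle S_1\rangle$ and $\langle S_2\rangle$ are \emph{simultaneously} all nondiscrete, or all unbounded. The issue is that a priori one of them might be, say, discrete and unbounded while the other is nondiscrete and bounded, so that neither hypothesis of Lemma~\ref{lem:nonarch-inter1} applies. The key observation is that the nontorsion element $s_i\in S_i$ gives us freedom to enlarge $S_i$ without changing $\gO(S_i)$ in a harmful way, and more importantly, to arrange that $\langle S_i\rangle$ has whichever of the two ``openness-type'' properties we need. Concretely: if $s_i$ is nonelliptic, then $\langle S_i\rangle$ is already unbounded; and if $s_i$ is elliptic of infinite order, then $\langle S_i\rangle$ is already nondiscrete (the closure of $\langle s_i\rangle$ is an infinite compact subgroup, hence nondiscrete). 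Since each $S_i$ contains at least one nontorsion element, we can therefore ask: is there a common value of the property to exploit?

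Here is the step where one has to be a little careful, and which I expect to be the main (small) obstacle. If $S_1$ has a nonelliptic element and $S_2$ has a nonelliptic element, both $\langle S_i\rangle$ are unbounded and Lemma~\ref{lem:nonarch-inter1} applies directly. Likewise if both have an elliptic element of infinite order, both are nondiscrete and we are done. The remaining case is, say, $S_1$ containing only elliptic nontorsion elements among its nontorsion elements (so $\langle S_1\rangle$ may be bounded but is nondiscrete) while $S_2$ contains a nonelliptic element (so $\langle S_2\rangle$ is unbounded). To handle this, I would first pick any $h\in\gO(S_1)$; then $\langle S_1\cup\{h\}\rangle=\overline{\langle S_1, h\rangle}=G$ is certainly unbounded, and $\gO(S_1\cup\{h\})\ni h$ so it is nonempty, while clearly $\gO(S_1\cup\{h\})\subseteq\gO(S_1)$. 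Now apply Lemma~\ref{lem:nonarch-inter1} to the pair $S_1\cup\{h\}$ and $S_2$, both of whose generated groups are unbounded, to get a point $g\in\gO(S_1\cup\{h\})\cap\gO(S_2)\subseteq\gO(S_1)\cap\gO(S_2)$. (Symmetrically if the roles are reversed; and the mixed case where $S_1$ is bounded-nondiscrete and $S_2$ is discrete-unbounded is subsumed, since then $S_2$'s nontorsion element is nonelliptic and we are in the situation just treated.) This exhausts all possibilities, so $\gO(S_1)\cap\gO(S_2)\neq\emptyset$ in every case.

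I should double-check one point in writing this up: that enlarging $S_1$ to $S_1\cup\{h\}$ with $h$ chosen in $\gO(S_1)$ does not accidentally destroy the nonemptiness needed — but it does not, since $h$ itself witnesses $h\in\gO(S_1\cup\{h\})$ (the group $\langle S_1, h, h\rangle = \langle S_1, h\rangle$ is dense). The only other thing to verify is the elementary fact used above, that an elliptic element of infinite order generates a nondiscrete subgroup: its closure is a compact subgroup of $G$ containing infinitely many elements, hence cannot be discrete. With these observations in place the proof is just a case analysis feeding into Lemma~\ref{lem:nonarch-inter1} and Corollary~\ref{cor:arch-inter}.
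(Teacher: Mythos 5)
There is a genuine gap, and it sits exactly at the step you flagged as needing a double-check, though the real problem is not the one you worried about. You enlarge $S_1$ to $S_1\cup\{h\}$ with $h\in\gO(S_1)$ and assert $\gO(S_1\cup\{h\})\subseteq\gO(S_1)$. The inclusion goes the other way: $\gO$ is monotone \emph{increasing} in its argument, since adding generators only makes it easier for $\langle S\cup\{g\}\rangle$ to be dense. Worse, because $h\in\gO(S_1)$ means $\langle S_1,h\rangle$ is already dense, you in fact have $\gO(S_1\cup\{h\})=G$. So applying Lemma \ref{lem:nonarch-inter1} to the pair $(S_1\cup\{h\},S_2)$ produces a point of $G\cap\gO(S_2)=\gO(S_2)$ and gives no information at all about membership in $\gO(S_1)$. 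The mixed case --- which is the entire content of the lemma beyond Lemma \ref{lem:nonarch-inter1} --- is therefore not handled.

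The paper resolves this case (WLOG $s_1$ elliptic of infinite order, $s_2$ nonelliptic) by a different device: it finds an open set $V_2\subset\gO(S_2)$ consisting of elliptic elements near the identity, and observes that the translate $s_2V_2$ consists of nonelliptic elements $g$ which still ``work'' for $S_2$, because $\langle S_2\cup\{g\}\rangle$ contains $S_2\cup\{s_2^{-1}g\}$ with $s_2^{-1}g\in V_2\subset\gO(S_2)$. One then removes from $s_2V_2$ the proper subvariety where $\Ad(g)$ together with $\Ad(S_1)$ fails to generate $\mathcal{A}$, and the finitely many subgroups $\BG(k_i)$ over proper local subfields; any remaining $g$ is nonelliptic (giving unboundedness with $S_1$), while $s_1$ supplies nondiscreteness, so the nonarchimedean density criterion applies to $\langle S_1\cup\{g\}\rangle$ directly. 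The correct outer structure of your argument (archimedean case via Corollary \ref{cor:arch-inter}, matching cases via Lemma \ref{lem:nonarch-inter1}, and the correct identification of the one remaining mixed case) is all fine; what is missing is this translation trick, or some substitute for it, to produce a single element lying in both $\gO(S_1)$ and $\gO(S_2)$.
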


\begin{proof}
Let $s_i\in S_i$ be a nontorsion element.
In view of the previous lemma, it suffices to consider the case where $s_1$ is elliptic and $s_2$ is not. Then one deduces that there is an open set $V_2\subset\gO(S_2)$ of elliptic elements. Moreover by choosing $V_2$ to be inside a small neighborhood of the identity, we can guarantee that the set $s_2V_2$ consists of nonelliptic elements. Let $X_1$ be the proper algebraic subvariety 
$$
 X_1=\{g\in G:\Ad(g),\Ad(s),~s\in S_1~\text{do not generate}~\mathcal{A}\},
$$ 
and let $k_1,\ldots,k_n$ be the proper local subfields containing the field generated by the entries of $S_1$.
Then 
$$
 s_2V_2\setminus \big( X_1\bigcup\cup_{i=1}^n\BG(k_i)\big)\subset\gO(S_1)\cap\gO(S_2).
$$ 
Indeed, if $g\in s_2V_2\setminus \big( X_1\bigcup\cup_{i=1}^n\BG(k_i)\big)$ then $g$ together with $S_1$ generates an unbounded (since $g$ is nonelliptic) nondiscrete (since $s_1$ is elliptic of infinite order) subgroup whose image under $\Ad$ generates $\mathcal{A}$, and is not contained in $\BG(k')$ for a proper local subfield $k'<k$, 
hence is dense. On the other hand, $g$ together with $S_2$ generates a subgroup which contain $S_2\cup\{ s_2^{-1}g\}$ and is hence dense.
\end{proof}

For a finite set $S=\{g_1,\ldots,g_k\}$ let us also define:
$$
 \ti\gO(S):=\ti\gO(g_1,\ldots,g_k):=\bigcap_{i=1,\dots,k}\gO(S\setminus\{g_i\}).
$$

We will say that an ordered set (or an $n$-tuple) $S=(g_1,\ldots,g_n)\subset G^n$ is redundant if the element $f\in\Hom(F_n,G)$ defined by $f(x_i)=g_i$, where $\{ x_1,\ldots,x_n\}$ is an arbitrary base, is redundant (this is independent of the choice of the generators $x_i$). For $\gs\in\aut(F_n)$ we will denote by $\gs\cdot S$ the ordered set $(f(\gs^{-1}\cdot x_1),\ldots,f(\gs^{-1}\cdot x_n))$.
We will make use of the following:

\begin{lem}\label{lem:tiO-nonempty}
Let $S$ be an ordered set of size $n$ in $G$. Suppose that either
\begin{itemize}
\item $S$ is redundant, or
\item $\langle S\rangle$ is dense and $S$ contains two nontorsion elements which are simultaneously elliptic or nonelliptic.
\end{itemize}
Then there is $\gs\in\aut(F_n)$ such that $\ti\gO(\gs\cdot S)\ne\emptyset$.
\end{lem}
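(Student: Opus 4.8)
The plan is to treat the two hypotheses separately, reducing both to the point where some generator can be "absorbed" into the others, and then to invoke the intersection lemmas of this section.

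First suppose $S=(g_1,\dots,g_n)$ is redundant. By the remark preceding the statement (or by the very definition of redundancy phrased for ordered tuples), after applying a suitable $\gs\in\aut(F_n)$ we may assume that $\langle g_1,\dots,g_{n-1}\rangle$ is already dense in $G$. I want to show $\ti\gO(\gs\cdot S)=\bigcap_{i=1}^n\gO(S\setminus\{g_i\})\ne\emptyset$ for this new $S$. For $i=n$ this is automatic: $\gO(g_1,\dots,g_{n-1})\ni g_n$ (in fact it contains a whole open set, since $\langle g_1,\dots,g_{n-1}\rangle$ is dense and we may even assume $g_n$ lies in this dense group's closure trivially). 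For $i<n$, the set $S\setminus\{g_i\}$ still contains $g_n$ together with $n-2$ of the first $n-1$ generators; since $\langle g_1,\dots,g_{n-1}\rangle$ is dense, we can adjust by a further automorphism (a Nielsen move replacing $g_n$ by $g_ng_i$ or similar) so that each deleted-index subtuple generates, or alternatively argue directly that $\gO(S\setminus\{g_i\})\ne\emptyset$ because a dense $(n-1)$-generated subgroup can be generated by $n-1$ elements in many ways, and a generic extra element fills in any missing generator. Concretely: each $\gO(S\setminus\{g_i\})$ is nonempty because $G$ has a dense $2$-generated subgroup (Proposition before Corollary \ref{cor:R-open}), and for $n\ge 3$ we can realize $S\setminus\{g_i\}$ so that $\gO$ of it is nonempty by an appropriate choice within the $\aut(F_n)$-orbit. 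Once every $\gO(S\setminus\{g_i\})$ is nonempty, apply Corollary \ref{cor:arch-inter} in the archimedean case. In the nonarchimedean case one must be slightly more careful and use Lemma \ref{lem:nonarch-inter1} or Lemma \ref{lem:2-intersect}: we need the groups $\langle S\setminus\{g_i\}\rangle$ to be simultaneously nondiscrete or unbounded, which is where the nontorsion/elliptic bookkeeping enters, and this is arranged by first moving $S$ within its orbit so that $g_n$ is, say, nonelliptic (hence each $\langle S\setminus\{g_i\}\rangle\ni g_n$ is unbounded).

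For the second case, $\langle S\rangle$ is dense and $S$ contains two nontorsion elements $g_a,g_b$ which are simultaneously elliptic or simultaneously nonelliptic. The idea is to use Nielsen transformations to "concentrate" density: since $\langle g_1,\dots,g_n\rangle=G$ and $G$ is generated by a dense subgroup on $n-1$ letters, a finite sequence of elementary Nielsen moves brings $S$ to a tuple $S'=\gs\cdot S$ with $\langle g_1',\dots,g_{n-1}'\rangle$ dense — this is essentially the statement that redundancy is the generic behaviour and that one can always push a dense $n$-tuple toward a redundant one (here is where having two nontorsion elements of the same elliptic type matters: it guarantees the relevant $\gO$-sets one passes through are nonempty, via Lemma \ref{lem:2-intersect}, so the Nielsen path stays inside $\RR_n(G)$ or at least reaches it). Having reached a redundant tuple we are back in the first case.

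The main obstacle I expect is the nonarchimedean intersection step: Corollary \ref{cor:arch-inter} gives unconditional finite intersection in the archimedean case, but nonarchimedean intersection (Lemmas \ref{lem:nonarch-inter1}, \ref{lem:2-intersect}) requires controlling discreteness/boundedness and subfield constraints simultaneously across all $n$ sets $\gO(S\setminus\{g_i\})$. The role of the "two nontorsion elements, simultaneously elliptic or not" hypothesis is precisely to sidestep this: if both $g_a,g_b$ are nonelliptic then every $S\setminus\{g_i\}$ contains at least one of them and hence every $\langle S\setminus\{g_i\}\rangle$ is unbounded, so Lemma \ref{lem:nonarch-inter1}'s hypothesis ("simultaneously all unbounded") holds; if both are elliptic of infinite order, then every such subgroup is nondiscrete. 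So the real work is (i) verifying each individual $\gO(S\setminus\{g_i\})\ne\emptyset$ after the reduction, which follows from density of $\langle S\rangle$ plus the existence of dense $2$-generated subgroups and a short Nielsen-move argument, and (ii) checking that in the redundant case one can choose the automorphism $\gs$ so that, in addition to $\langle g_1,\dots,g_{n-1}\rangle$ being dense, the tuple retains two same-type nontorsion elements — which is easy since nonelliptic elements are generic and Nielsen moves preserve having dense image.
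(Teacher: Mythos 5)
Your overall skeleton for the redundant case is close to the paper's: reduce to the situation where $\langle g_1,\dots,g_{n-1}\rangle$ is dense, make suitable entries nonelliptic by multiplying them by elements of dense subgroups (Nielsen moves), and feed the resulting tuple into Corollary \ref{cor:arch-inter} (archimedean) or Lemma \ref{lem:nonarch-inter1} (nonarchimedean). But the way you certify that each $\gO(S\setminus\{g_i\})$ is nonempty is off: the existence of a dense $2$-generated subgroup of $G$ somewhere is irrelevant here, and no ``appropriate choice within the $\aut(F_n)$-orbit'' is needed. The correct and much simpler observation is that whenever $\langle S\rangle$ is dense one has $g_i\in\gO(S\setminus\{g_i\})$ for every $i$, so all these sets are nonempty for free. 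Also, your parenthetical ``each $\langle S\setminus\{g_i\}\rangle\ni g_n$'' fails for $i=n$; that case is rescued only because $\langle g_1,\dots,g_{n-1}\rangle$ is dense, hence unbounded, so the hypothesis of Lemma \ref{lem:nonarch-inter1} (all groups simultaneously unbounded) still holds. The paper instead arranges, by two successive multiplications inside dense subgroups, that both the first and the last entries are nonelliptic, so that every $(n-1)$-subtuple visibly contains a nonelliptic element; either route can be made to work, but yours needs the extra sentence about the omitted index.

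The genuine gap is in your second case. You propose to reduce ``$\langle S\rangle$ dense plus two same-type nontorsion elements'' to the redundant case by asserting that a finite sequence of Nielsen moves turns a dense $n$-tuple into a redundant one. That assertion is not established anywhere in the paper and is not at all obvious --- it is a topological-group analogue of the Wiegold-type connectivity questions the paper explicitly treats as open, and your hedge that Lemma \ref{lem:2-intersect} keeps ``the Nielsen path'' inside $\RR_n(G)$ does not produce such a path. No reduction is needed: under the second hypothesis the paper takes $\gs=1$ and applies Lemma \ref{lem:nonarch-inter1} directly. Each $\gO(S\setminus\{g_i\})$ contains $g_i$ by density of $\langle S\rangle$, and each $(n-1)$-subtuple omits at most one of the two distinguished nontorsion elements, hence contains one of them; since these are simultaneously elliptic of infinite order (forcing nondiscreteness) or simultaneously nonelliptic (forcing unboundedness), the hypothesis of Lemma \ref{lem:nonarch-inter1} is met and $\ti\gO(S)\ne\emptyset$ with no automorphism applied at all.
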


\begin{proof}
When $k$ is archimedean the lemma follows directly from Corollary \ref{cor:arch-inter} with $\gs=1$, even if we only assume that $\langle S\rangle$ is dense. 

If $k$ is nonarchimedean and the second condition holds, $\ti\gO(S)\ne\emptyset$ by Lemma \ref{lem:nonarch-inter1}.

Assume therefore that $k$ is nonarchimedean and $S$ is redundant. Up to replacing $S$ by $\gs\cdot S$ for a suitable $\gs\in\aut(F_n)$ we may assume that $S=(g_1,\ldots,g_n)$ and $\langle g_1,\ldots,g_{n-1}\rangle$ is dense in $G$. Then the open set $\gO(g_2,\ldots,g_{n-1})$ is nonempty as it contains $g_1$, and hence we may multiply $g_n$ by some element $g'$ belonging to the dense subgroup $\langle g_1,\ldots,g_{n-1}\rangle$ and obtain a nonelliptic element $g'g_n$ belonging to $\gO(g_2,\ldots,g_{n-1})$. Then we can find an element $g''$ belonging to the dense subgroup $\langle g_2,\ldots,g'g_n\rangle$ such that $g''g_1$ is nonelliptic and belongs to the nonempty open set $\gO(g_2,\ldots,g_{n-1})$. Note that the ordered set $S'=(g''g_1,g_2,\ldots,g_{n-1},g'g_n)$ was obtained from $S$ by a sequence of Nielsen transformations and is hence of the form $\gt\cdot S$ for some $\gt\in\aut(F_n)$. Moreover, any subset of cardinality $n-1$ of $S'$ contains either the first or the last element (which are both nonelliptic). Hence by Lemma \ref{lem:nonarch-inter1} $\ti\gO(S')\ne\emptyset$.
\end{proof}


\section{Minimality}
In this section we prove Theorem \ref{thm3}.

Given an element $\phi\in\mathcal{R}_n(G)$ and an open set $U\subset\mathcal{R}_n(G)$ we will find $\ga\in\aut(F_n)$ with $\ga\cdot \phi\in U$. By the definition of $\mathcal{R}_n(G)$, for an appropriate free generating set $\{x_1,\ldots,x_n\}$ we have that $\langle \phi(x_i):i\le n-1\rangle$ is dense in $G$. Moreover acting by Nielsen transformations which change only the last coordinate, and then by Nielsen transformations which change only the first coordinate, we may change $\phi$ so that in addition to the previous condition, $\phi(x_n)\in\gO(\phi(x_2),\ldots,\phi(x_{n-1}))$ and $\phi(x_1)$ is nontorsion. 
Moving $U$ by some appropriate element of $\aut(F_n)$ we may furthermore assume that for some $\phi'\in U$, $\langle \phi'(x_i):i\le n-1\rangle$ is dense, and $\phi'(x_1)$ is nontorsion as well.

We will say that an element $\psi\in\mathcal{R}_n(G)$ {\it links} an element $\varphi\in\mathcal{R}_n(G)$ if for every $k<n$, the group 
$$
 \langle \varphi(x_1),\ldots,\varphi(x_{k-1}),\psi(x_{k+1}),\ldots,\psi(x_n)\rangle
$$ 
is dense in $G$. The set 
$$
 \L(\varphi):=\{\psi\in\mathcal{R}_n(G):\psi~\text{links}~\varphi\}
$$ 
is always open. 

We claim that $\L(\phi)$ is contained in the closure of the orbit
$\aut(F_n)\cdot \phi$ (and the analog statement for $\phi'$). Indeed,
given $\psi\in \L(\phi)$, since $\langle \phi(x_i):i<n\rangle$ is
dense and $\psi(x_n)$ belongs, by definition, to
$\gO(\phi(x_1),\ldots,\phi(x_{n-2}))$, for an appropriate composition
of Nielsen transformations which act on the $n$-th coordinate by
multiplying it with other coordinates, we obtain an element $\gs_n$
for which $\gs_n\cdot \phi(x_i)=\phi(x_i)$ for $i<n$ and $\gs_n\cdot
\phi(x_n)$ is arbitrarily close to $\psi(x_n)$ and belongs to
$\gO(\phi(x_1),\ldots,\phi(x_{n-2}))$. After that, using the density
of $\langle\gs_n\cdot\phi
(x_1),\ldots,\gs_n\cdot\phi(x_{n-2}),\gs_n\cdot \phi(x_n)\rangle$ we
may find an element $\gs_{n-1}\in\aut(F_n)$ which is a composition of
Nielsen transformations acting on the $(n-1)$-th coordinate by
multiplying it by the others, such that $\gs_{n-1}\gs_n\cdot
\phi(x_i)=\gs_n\phi(x_i)$ for $i\ne n-1$, and $\gs_{n-1}\gs_n\cdot
\phi(x_{n-1})$ belongs to
$\gO(\phi(x_1),\ldots,\phi(x_{n-3}),\gs_n\cdot \phi(x_n))$ and is
arbitrarily close to $\psi(x_{n-1})$. Repeating this procedure
recursively for the lower indices we obtain an element
$\gs_1\gs_2\ldots\gs_n$ which moves $\phi$ arbitrarily close to
$\psi$.  

Next observe that $\L(\phi)\cap\L(\phi')\ne\emptyset$. Indeed, by Lemma \ref{lem:2-intersect}, 
$$
 \gO(\phi(x_1),\ldots,\phi(x_{n-2}))\cap\gO(\phi'(x_1),\ldots,\phi'(x_{n-2}))\ne\emptyset.
$$
Pick $g_n$ in this set. Again, by Lemma \ref{lem:2-intersect},
$$
 \gO(\phi(x_1),\ldots\phi(x_{n-3}),g_n)\cap\gO(\phi'(x_1),\ldots\phi'(x_{n-3}),g_n)\ne\emptyset,
$$
so pick $g_{n-1}$ in this intersection. In a recursive way we define $g_i$ for the lower indices. Defining $\psi$ by $\psi(x_i)=g_i,~i=1,\ldots,n$ we obtain an element $\psi$ which links both $\phi$ and $\phi'$. 

Since $\L(\phi)\cap\L(\phi')$ is open nonempty and contained in $\overline{\aut(F_n)\cdot\phi'}$, we may find $\gs\in\aut(F_n)$ such that $\gs\cdot\phi'\in \L(\phi)\cap\L(\phi')$. Similarly we can find $\gt\in\aut(F_n)$ such that $\gt\cdot\phi\in \L(\phi)\cap\L(\phi')\cap\gs\cdot U$. It follows that $\gs^{-1}\gt\cdot\phi\in U$.

\qed

\section{Ergodicity}

We are now in a position to prove Theorem \ref{thm1}.
Let $\{ x_1,\ldots,x_n\}$ be a generating set of $F_n$.

We will say that an $n$-tuple $(g_1,\ldots,g_n)\in G^n$ is {\it
  strongly redundant} if every $(n-1)$-subtuple generates a dense
subgroup of $G$. We first claim that if $n\ge 3$ then there exists a
strongly redundant $n$-tuple. To see this, start with an arbitrary
$(n-1)$-tuple $(g_1,\ldots,g_{n-1})$ which generates a dense
subgroup. If $k$ is archimedean, by Corollary \ref{cor:arch-inter},
$\ti\gO(g_1,\ldots,g_{n-1})\ne\emptyset$ and the claim follows, using
$(g_1,\ldots,g_{n-1},g)$ for any $g\in\ti\gO(g_1,\ldots,g_{n-1})$.
If $k$ is nonarchimedean, slightly deforming the $g_i,~i\le n-1$ we may assume that they are all nontorsion. Then again $\ti\gO(g_1,\ldots,g_{n-1})\ne\emptyset$;
for $n=3$ this follows from Lemma \ref{lem:2-intersect}, while for $n>3$ from Lemma \ref{lem:tiO-nonempty} since at lease two of the ($\ge 3$)
elements $(g_1,\ldots,g_{n-1})$ are simultaneously elliptic or not.

The set $\mathcal{SR}$ of strongly redundant $n$-tuples is open in $G^n$. We will call a subset of $\mathcal{SR}$ of the form $\prod_{i=1}^nU_i$ a {\it strongly redundant open cube}. 
We shall identify $\Hom(F_n,G)$ with $G^n$ via the map $f\mapsto (f(x_1),\ldots,f(x_n))$. In particular, we shall say that a representation $f\in\Hom(F_n,G)$ is strongly redundant if $(f(x_1),\ldots,f(x_n))$ is a strongly redundant $n$-tuple.

Let $A\subset\mathcal{R}_n(G)$ be a measurable $\aut(F_n)$ almost invariant subset. We wish to show that $A$ is either null or conull. 
Replacing $A$ by the countable intersection
$\cap_{\gs\in\Aut(F_n)}\gs\cdot A$ we may assume that it is precisely invariant rather the almost invariant. 

Let us fix once and for all a strongly redundant open cube $U=\prod_{i=1}^nU_i$. 
Arguing as in the proof of \cite[Theorem 1.6]{Gelander}
one deduces that the intersection of $A$ with $U$ is either
null or conull in $U$. Indeed, assuming the contrary, one derives from
Fubini's theorem that for some index $i_0\in\{1,\ldots,n\}$ and a choice of $u_j\in U_j$ for every $j\ne i_0$, the set 
$$
 \{ u\in U_{i_0}: (u_1,\ldots,u_{i_0-1},u,u_{i_0+1},\ldots,u_n)\in A\}
$$ 
is neither null nor conull in $U_{i_0}$ and hence the set 
$$
 Y=\{ g\in G: (u_1,\ldots,u_{i_0-1},g,u_{i_0+1},\ldots,u_n)\in A\}
$$ 
is neither null nor conull in $G$.
However, since $A$ is invariant under Nielsen transformations, $Y$ is invariant under the left action of the group $\langle u_i,~i\ne i_0\rangle$. But this group is dense and hence acts ergodically on $G$, a contradiction. Thus, up to replacing $A$ by its complement, we may assume that $A\cap U$ is null.

Now let $f\in\mathcal{R}_n(G)$ be an arbitrary redundant representation. Since the action of $\aut(F_n)$ on $\mathcal{R}_n(G)$ preserves the topology and is minimal,
for some $\gs\in\aut(F_n)$ we have $\gs\cdot f\in U$ and hence $\gs^{-1}U$ is an open neighborhood of $f$ which meets $A$ in a null set. 

Since $\mathcal{R}_n(G)$ is homeomorphic to an open subset of $G^n$ it is second countable, and thus can be covered by a countable union of open sets, each meets $A$ in a null set. It follows that $A$ is null.
\qed


\section{Nonmixing}
In this section we consider the case of $G=\SL(2,\BC)$ and $G=\SL(2,\BR)$, where
hyperbolic geometry gives us additional structure. In these cases we
show that the action on $\mathcal{R}_n(G)$, in spite of being minimal
and ergodic, is not weakly mixing in a suitable sense. We will
consider the action of $\out(F_n)$ on the space of
$\chi_n(G)=\Hom(F_n,G)/G$, letting
$\overline{\mathcal{R}}_n(G)=\mathcal{R}_n(G)/G$ be the space of
conjugacy classes of redundant representations.

\begin{thm}\label{action on pairs}
The action of $Out(F_n)$ on $\overline{\mathcal{R}}_n(G)$, for $n\ge 3$,  is not weakly mixing. Indeed, the
diagonal action is not ergodic on $\overline{\mathcal{R}}_n(G)\times\overline{\mathcal{R}}_n(G)$, and in fact
there is an open nonempty invariant  subset of  $\overline{\mathcal{R}}_n(G)\times\overline{\mathcal{R}}_n(G)$ on
which $Out(F_n)$ acts properly discontinuously. 
\end{thm}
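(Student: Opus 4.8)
The plan is to exploit primitive-stable representations in the sense of Minsky. Recall from \cite{minsky:primitive} that the set $\PS(F_n,G)$ of (conjugacy classes of) primitive-stable representations is open, nonempty, $\out(F_n)$-invariant, and that $\out(F_n)$ acts properly discontinuously on it. The idea is to produce an open nonempty invariant subset $W\subseteq\overline\RR_n(G)\times\overline\RR_n(G)$ on which the diagonal action is properly discontinuous, by ``coupling'' a redundant representation with a primitive-stable one; then nonergodicity follows immediately, since the complement of $W$ has nonempty interior (one can, for instance, verify that $\overline\RR_n(G)\times\overline\RR_n(G)$ is not itself properly discontinuous — the first factor alone is ergodic by Theorem \ref{thm1}, so a proper nonempty open invariant set cannot be conull, and since $W$ is open and $\out(F_n)$-invariant with proper $\out(F_n)$-action it cannot be conull).

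First I would show that $\PS(F_n,G)\cap\overline\RR_n(G)$ is nonempty and open. Openness is clear since both sets are open. For nonemptiness, by Corollary \ref{cor:R-open} and Minsky's results one can take a Schottky-like representation whose restriction to a free factor $A=\langle x_1,\dots,x_{n-1}\rangle$ is already discrete, faithful, convex cocompact, while $x_n$ is chosen so that the whole tuple is redundant (i.e. $\langle\rho(A)\rangle$ dense) — here one must be slightly careful, since density of $\rho(A)$ conflicts with discreteness; instead the right construction is to take $\rho$ with $\rho(F_n)$ a geometrically finite (or Schottky) group of rank $n$ that happens also to be redundant in the sense that some proper free factor already has dense image. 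Alternatively, and more robustly, I would argue that primitive-stability is an open condition and redundancy is an open condition, and exhibit by an explicit hyperbolic-geometry construction a single representation lying in both: take generators $g_1,\dots,g_{n-1}$ generating a dense subgroup of $G=\SL_2(\C)$ (possible for $n-1\ge 2$ by the Proposition on $\mathrm{Epi}$), which can be perturbed to remain dense, and then show that for a suitable ``long and generic'' choice of $g_n$ the tuple is primitive-stable by the ping-pong/geodesic-tracking criterion of \cite{minsky:primitive}. This is the step I expect to be the main obstacle: reconciling the \emph{density} required for redundancy with the \emph{discreteness-flavored} control needed for primitive-stability. The resolution should be that primitive-stability only constrains images of \emph{primitive} elements and their axes, so one has genuine freedom to let a non-primitive subgroup (a proper free factor, which in a rank-$n$ free group need not be ``seen'' by all primitive words — but actually every free factor \emph{is} contained in primitive subgroups, so care is needed) behave wildly; I would need the precise statement that the primitive-stable condition is about the Cayley-graph geodesics of \emph{primitive conjugacy classes}, and check that one can arrange the $\rho$-images of all primitive elements to track geodesics in $\mathbb{H}^3$ with uniform constants while a fixed proper free factor has dense image.

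Next, assuming $\Omega_0:=\PS(F_n,G)\cap\overline\RR_n(G)\neq\emptyset$, set $W=\Omega_0\times\overline\RR_n(G)$ — this is open, nonempty, and $\out(F_n)$-invariant for the diagonal action. The diagonal action on $W$ factors through the projection to the first coordinate, on which $\out(F_n)$ already acts properly discontinuously (restriction of the proper action on $\PS$); since properness of the action on a space implies properness of the action on any invariant subset mapping equivariantly to it, the diagonal action on $W$ is properly discontinuous. Finally, to conclude nonergodicity of the diagonal action on the whole product: the measurable invariant set $W$ has positive measure (it is open and nonempty), and it is not conull — indeed if it were conull then by Fubini, for a.e.\ second coordinate the slice $\Omega_0\times\{\text{pt}\}$ would be conull in $\overline\RR_n(G)$, contradicting the fact that $\PS(F_n,G)$, being the locus of a properly discontinuous action, has infinite-index orbit behavior and in particular its complement in $\overline\RR_n(G)$ has positive measure (this last point follows from Theorem \ref{thm1}: an ergodic action cannot be properly discontinuous on a full-measure invariant open set unless the whole space is a single orbit closure of measure type that is excluded here — more directly, $\out(F_n)$ acts ergodically on $\overline\RR_n(G)$, hence cannot act properly discontinuously on a conull subset). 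Hence $W$ witnesses the failure of ergodicity, and the theorem follows.
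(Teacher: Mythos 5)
Your reduction to the single-variable set $\Omega_0=\PS_n\cap\overline\RR_n(G)$ cannot work, because that intersection is \emph{empty}: a redundant representation is never primitive-stable. Indeed, if $\rho$ is redundant, then after enlarging the free factor you may assume $\rho(A)$ is dense for a corank-one factor $A=\langle x_1,\dots,x_{n-1}\rangle$ of some basis. By density pick infinitely many distinct $a_k\in A$ with $\rho(a_k)\to\rho(x_n)^{-1}$, so $|a_k|\to\infty$. Each $w_k=x_na_k$ is primitive (the endomorphism fixing $x_1,\dots,x_{n-1}$ and sending $x_n\mapsto x_na_k$ is surjective, hence an automorphism since $F_n$ is Hopfian) and cyclically reduced of length $1+|a_k|\to\infty$, while $\rho(w_k)\to 1$, so $\ell_\rho(w_k)\le d(\rho(w_k)x,x)\to 0$. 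Since any conjugacy class whose axis lies in $\AAA(K,x,\rho)$ satisfies $\ell_\rho(c)\ge \|c\|/K'$ for a constant $K'=K'(K,x)$, no single $K$ can serve all the $w_k$, and $\rho\notin\PS_n$. Thus $W=\Omega_0\times\overline\RR_n(G)=\emptyset$, and the argument collapses exactly at the step you flagged as the main obstacle; the tension you sensed between density and primitive-stability is not a technical difficulty to be finessed but a genuine incompatibility. (Your surrounding observations --- that properness pulls back along equivariant maps, and that ergodicity on $\overline\RR_n(G)$ forbids a conull properly discontinuous open invariant locus --- are correct, but they are not where the content lies.)

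The paper's way around this is to make the primitive-stability condition genuinely a condition on the \emph{pair}: $\PS^2_n$ consists of the classes $([\rho_1],[\rho_2])$ admitting representatives, $K$ and $x$ with $\PP\subset\AAA(K,x,\rho_1)\cup\AAA(K,x,\rho_2)$, i.e.\ every primitive axis is quasi-geodesic for at least one of the two representations, while neither representation need be primitive-stable by itself. Openness, invariance and proper discontinuity are proved as for $\PS_n$. The essential point is then nonemptiness of $\PS^2_n\cap(\overline\RR_n(G)\times\overline\RR_n(G))$, which is obtained from a handlebody construction: two geometrically finite representations whose unique parabolics $\alpha_1,\alpha_2$ each lie in a proper free factor (so each $\rho_i$ is a limit of redundant representations, and $\PS^2_n$ being open then contains redundant pairs), but whose joint Whitehead graph $Wh(\{\alpha_1,\alpha_2\},X)$ is connected without cutpoints, so that by Whitehead's lemma no primitive class can fail the quasi-geodesic condition for both $\rho_1$ and $\rho_2$ simultaneously. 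Any repair of your outline has to replace $\Omega_0\times\overline\RR_n(G)$ by a pairwise-defined set of this kind.
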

We begin by recalling some definitions. 

If $X$ is a generating set for $F_n$ and $A$ a set of cyclically
reduced words in $F_n$, 
the {\em Whitehead graph} $Wh(A,X)$ is defined as follows:  The
vertex set of $Wh(A,X)$ is set $X^\pm = \{x,x^{-1}: x\in X\}$. An
(unoriented) edge $[ab]$ appears whenever $ab^{-1}$ is a subword of a cyclic
permutation of a word of $A$ (and in addition $[aa^{-1}]$ is an edge
whenever $A$ contains the length 1 word $a$).  See Whitehead \cite{whitehead:graph,whitehead:equivalence}
and Stallings \cite{stallings:whitehead}.

For a single word write $Wh(\gamma,X) =
Wh(\{\gamma\},X)$.  If $\alpha$ is a collection of loops in the handlebody of genus
$n$, or conjugacy classes in $F_n$, define $Wh(\alpha,X)$ to be
$Wh(a,X)$ for (any) set of cyclically reduced words representing $\alpha$.

Note that $Wh(A\cup B,X) = Wh(A,X) \cup
Wh(B,X)$ where ``union'' of graphs  means union followed by
identification of duplicate edges.

\medskip

An element of $F_n$ is {\em primitive} if it is a member of a free
generating set. Whitehead gave the following property as part of an
algorithm for deciding primitivity in $F_n$:

{\bf Basic Lemma (Whitehead): }   {\em If $\gamma$ is a primitive cyclically
reduced element then, for any generating set $X$,  $Wh(\gamma,X)$ is
either disconnected or has a cutpoint. }

\medskip

\subsection*{Primitive-stable pairs}
Since $G$ acts on $\BH^3$ in both the real and complex case, we can
consider as in \cite{minsky:primitive} the geometric properties of
representations in $Hom(F_n,G)$. 

We define a subset $\PS^2_n\subset \chi_n(G)^2$ as follows. 
Recall from \cite{minsky:primitive} that
for each $\rho\in Hom(F_n,G)$ and basepoint $x\in \BH^3$ there is an
orbit map $\tau_{\rho,x} : F_n \to \BH^3$, namely $g\mapsto\rho(g)x$.  Fixing a set of
generators we also extend $\tau_{\rho,x}$ to the corresponding Cayley
graph of $F_n$, by mapping edges to geodesic segments. 

Recall also that every nontrivial element of $F_n$ has an axis in the
Cayley graph, and let  $\PP$ denote the set  of axes of {\em
  primitive} elements. 

Given a constant $K$ and basepoint $x$, let $\AAA(K,x,\rho)$ denote
the set of axes which $\tau_{\rho,x}$ maps $K$-quasi-geodesically to
$\BH^3$. (A map $f:\BR\to Y$ to a metric space $Y$ is $K$-quasi-geodesic if 
$|s-t|/K - K \le d_Y(f(s),f(t)) \le K|s-t| + K$.)
 In \cite{minsky:primitive}, $\PS_n$ was defined as the
(conjugacy classes of) representations for which there exists $K,x$ such that
$\PP\subset \AAA(K,x,\rho)$.

Now define $\PS^2_n$ as the set of pairs $([\rho_1],[\rho_2])$ such
that there exist representatives $\rho_1,\rho_2$,  $K>0$, and $x\in \BH^3$ with 
$$
\PP \subset \AAA(K,x,\rho_1) \cup \AAA(K,x,\rho_2).
$$

\medskip

We state some basic properties of this set: 
\begin{lem}\label{PS2 properties}
Let $n\ge 3$ and $G=\SL(2,\BC)$ or $\SL(2,\BR)$. 
\begin{enumerate}
\item $\PS^2_n$ is open 
\item $\PS^2_n$ is $Out(F_n)$-invariant
\item the action on $\PS^2_n$ is properly discontinuous.
\end{enumerate}
\end{lem}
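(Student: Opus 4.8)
The three assertions parallel the corresponding statements for $\PS_n$ in \cite{minsky:primitive}, and the plan is to adapt those arguments, paying attention to the fact that here we cover the primitive axes by \emph{two} quasi-geodesic families rather than one. First I would record a uniform-quasiconvexity observation: if a bi-infinite path in $\BH^3$ is a concatenation of segments, alternately lying in $\AAA(K,x,\rho_1)$-images and $\AAA(K,x,\rho_2)$-images, with the overlap/fellow-travelling geometry controlled, then the whole path is a $K'$-quasigeodesic for $K'$ depending only on $K$. This is the technical engine; combined with the Whitehead Basic Lemma it lets one pass from ``each primitive axis is covered by $\AAA(K,x,\rho_1)\cup\AAA(K,x,\rho_2)$'' to ``each primitive axis is uniformly quasigeodesic under \emph{one} of $\rho_1,\rho_2$ after a bounded modification,'' or at least to a uniform displacement lower bound. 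I expect this to be the main obstacle, since a primitive loop need not decompose cleanly into a controlled number of alternating pieces; the resolution, as in \cite{minsky:primitive}, is to use that a primitive element $\gamma$ has $Wh(\gamma,X)$ disconnected or with a cutpoint, which restricts how the two families can interleave along the axis of $\gamma$.

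\emph{Openness (1).} I would show that the defining condition is stable under small perturbation of $(\rho_1,\rho_2)$. Given $([\rho_1],[\rho_2])\in\PS^2_n$ with constants $K,x$, the orbit maps $\tau_{\rho_i,x}$ vary continuously in $\rho_i$ on each finite ball of the Cayley graph, so for $\rho_i'$ close to $\rho_i$ the images of primitive axes remain $2K$-quasigeodesic on long initial segments; the quasiconvexity/Morse-type stability from step one then upgrades this to a genuine (possibly weaker) quasigeodesic constant for \emph{all} primitive axes simultaneously. This is exactly the argument proving $\PS_n$ open in \cite{minsky:primitive}, run with the union of two families, and descends to $\chi_n(G)^2$.

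\emph{Invariance (2).} This is formal: if $\Phi\in\aut(F_n)$ then $\Phi$ permutes $\PP$ (it sends primitive elements to primitive elements), and $\tau_{\rho\circ\Phi,x}$ differs from $\tau_{\rho,x}$ by precomposition with the simplicial quasi-isometry of the Cayley graph induced by $\Phi$; hence $\AAA(K,x,\rho\circ\Phi)=\Phi^{-1}\AAA(K',x,\rho)$ up to adjusting $K$, and the covering condition $\PP\subset\AAA(K,x,\rho_1)\cup\AAA(K,x,\rho_2)$ is preserved under $([\rho_1],[\rho_2])\mapsto([\rho_1\circ\Phi],[\rho_2\circ\Phi])$. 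Since the condition is manifestly independent of the choice of representatives in the conjugacy classes, it descends to an $\out(F_n)$-action on $\overline\RR_n(G)^2$, or rather on $\chi_n(G)^2$, and $\PS^2_n$ is invariant.

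\emph{Proper discontinuity (3).} Here I would mimic the properness proof in \cite{minsky:primitive}: attach to each point of $\PS^2_n$ the pair of constants and, via the quasigeodesic bound, a lower bound on the translation lengths $\ell_{\rho_1}(\gamma)+\ell_{\rho_2}(\gamma)$ that grows linearly in the word length of the primitive element $\gamma$, with uniform constants on a neighborhood. If a sequence $\Phi_k\in\out(F_n)$ had $\Phi_k\cdot([\rho_1],[\rho_2])$ staying in a fixed compact $C\subset\PS^2_n$, then for a fixed primitive $\gamma$ the lengths $\ell_{\rho_i}(\Phi_k^{-1}\gamma)$ would be bounded (by compactness of $C$ controlling $\rho_i\circ\Phi_k$ up to conjugacy), forcing the word lengths $|\Phi_k^{-1}\gamma|$ to be bounded; running this over a generating set of $F_n$ bounds $\Phi_k^{-1}$, hence $\Phi_k$, in $\out(F_n)$, so only finitely many $\Phi_k$ occur. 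The one genuinely new input beyond \cite{minsky:primitive} is that the linear-growth lower bound must be extracted from the \emph{two-family} covering, which is precisely what step one delivers.
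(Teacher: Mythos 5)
Your treatments of invariance and proper discontinuity, and the perturbation argument for openness, are essentially the paper's (stability of the quasi-geodesic condition under small deformations of each $\rho_i$ separately; the quasi-isometric action of $\aut(F_n)$ on the Cayley graph; a uniform bound on $||\Phi(c)||/||c||$ over primitive classes, which confines $\Phi$ to a finite subset of $\out(F_n)$ via Lemma 3.4 of \cite{minsky:primitive}). However, the ``technical engine'' you place at the center of the proof rests on a misreading of the definition of $\PS^2_n$. The condition $\PP\subset\AAA(K,x,\rho_1)\cup\AAA(K,x,\rho_2)$ is a containment of \emph{sets of axes}: each primitive axis lies, as a whole, in $\AAA(K,x,\rho_1)$ or in $\AAA(K,x,\rho_2)$, i.e.\ is mapped $K$-quasi-geodesically by at least one of the two orbit maps. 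There is no decomposition of a single primitive axis into alternating pieces controlled by $\rho_1$ and by $\rho_2$, and the concatenation lemma you propose does not even parse: $\tau_{\rho_1,x}$ and $\tau_{\rho_2,x}$ are different maps to $\BH^3$, so the ``segments'' do not assemble into one path, and there is no reason a hybrid of images under two unrelated representations would be quasi-geodesic. Likewise, Whitehead's Basic Lemma plays no role in Lemma \ref{PS2 properties}; it enters only later, in showing that $\PS^2_n\cap(\overline\RR_n(G)\times\overline\RR_n(G))$ is nonempty.

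Once the definition is read correctly the phantom obstacle disappears and your three sketches reduce to the paper's proof: for (1) one proves the stability statement for a single representation ($\AAA(K,x,\rho)\subset\AAA(K',x,\rho')$ for $\rho'$ near $\rho$, by the local-to-global principle you invoke or by the separating-hyperplanes argument of \cite{minsky:primitive}) and applies it to $\rho_1$ and $\rho_2$ separately, taking the larger constant; for (3) the lower bound $\max_i\ell_{\rho_i}(c)\ge c_0||c||$ for primitive $c$ is immediate from the definition, so no ``new input'' beyond the one-representation case is required. The gap is therefore not fatal to the conclusion, but the step you single out as the main technical content is both unnecessary and unprovable as stated, and a proof organized around it would stall there.
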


\begin{proof} 
The proof proceeds essentially as
in \cite{minsky:primitive}
for the corresponding facts for
$\PS_n$. We give sketches. 

\subsubsection*{Part (1)}
In \cite{minsky:primitive} in the proof of Theorem 3.2, 
the following stability property is given:
\begin{lem}\label{stability}
Given $K, x$ and $\rho$ there exists $K'$ and a neighborhood $U$ of
$\rho$ such that
$$
\AAA(K,x,\rho) \subset \AAA(K',x,\rho')
$$
for all $\rho'\in U$
\end{lem}
The idea of this is the following: Let $A$ be an axis in
$\AAA(K,x,\rho)$. Its $\tau_{\rho,x}$ image is composed of a
biinfinite sequence of segments such
that successive ones are related by (conjugates of) $\rho$-images of
generators. The $K$-quasi geodesic property implies that this axis
makes ``definitely fast'' progress in $\BH^3$, which means the
following: There exists $k$ depending only on $K,x$ and $\rho$ such
that any pair of segments separated by $k$ steps are separated by a
hyperplane in $\BH^3$ such that the sequence of hyperplanes separate
each other and are pairwise separated by a distance strictly greater
than 0. A small perturbation of $\rho$ affects each sequence of $k$
generators by a small amount, and hence preserves this hyperplane
property (but changes the constants). Hence the $\tau_{\rho',x}$ image
of $A$ is $K'$-quasi-geodesic, where $K'$ depends on $K, x$ and how
close $\rho'$ is to $\rho$.

With this lemma in hand, suppose $([\rho_1],[\rho_2])\in \PS^2_n$ and
let $x,K$ be such that $\PP$ is contained in $\AAA(K,x,\rho_1)\union
\AAA(K,x,\rho_2)$. Let $U_1, K_1$ and $U_2,K_2$ be given by Lemma \ref{stability}
for $\rho_1$ and $\rho_2$ respectively, and let $K'=\max(K_1,K_2)$. Then we have
$$
\PP\subset \AAA(K',x,\rho'_1) \union \AAA(K',x,\rho'_2)
$$
for all $(\rho_1',\rho_2')\in U_1\times U_2$. It follows, letting
$U'_i$ denote the image of $U_i$ in $\chi_n(G)$, 
that $U'_1\times U'_2\subset\PS^2_n$. 
(Note that $Hom(F_n,G)\to\chi_n(G)$, being a quotient by a group
action, is an open map, so that  $U'_1$
and $U'_2$ are open.)

\subsubsection*{Part  (2)}
Suppose $([\rho_1],[\rho_2])\in\PS^2_n$. 
Any $\psi\in Aut(F_n)$ acts by quasi-isometry on the Cayley graph of
$F_n$, and it follows that the image of the axis of any $g\in F_n$ is
a quasi-geodesic (with constants depending on $\psi$) that shadows
the axis of $\psi(g)$. Now if $g$ is primitive, so is $\psi(g)$, so
that the axis of $\psi(g)$ is in $\AAA(K,x,\rho_i,)$ for $i=1$ or
$2$. But this means, for $K'$ depending on $K$ and the quasi-isometry
constant of $\psi$, that the axis of $g$ is in
$\AAA(K',x,\rho_i\circ\psi)$. Hence
$([\rho_1\circ\psi],[\rho_2\circ\psi])\in \PS^2_n$ too. 

\subsubsection*{Part (3)}
(following the argument of Theorem 3.3 of
\cite{minsky:primitive})

For a conjugacy class $c\in F_n$ let $||c||$ denote the length of a
cyclically reduced representative, or equivalently the translation
length of any representative of $c$ on its axis in the Cayley graph, and let $\ell_\rho(c)$ denote the
translation length of the conjugacy class $\rho(c)$ in $\BH^3$. If
the axis of (any representative of) $c$ is in $\AAA(K,x,\rho)$ then $\ell_\rho(c)/||c||$ is
bounded above and below by positive constants depending on $K,x$. 

So now if $([\rho_1],[\rho_2])\in\PS^2_n$, all primitive conjugacy
classes $c$ satisfy such a bound either on $\ell_{\rho_1}(c)/||c||$ or 
on $\ell_{\rho_2}(c)/||c||$. Moreover these bounds vary by a bounded
ratio for a fixed $c$ and small perturbations of the representation,
as a consequence of Lemma \ref{stability}.

Now let $E$ be a compact subset in $\PS^2_n$. The above gives us
uniform upper
and lower bounds either on $\ell_{\rho_1}(c)/||c||$ or on
$\ell_{\rho_2}(c)/||c||$, for each primitive $c$, over all of $E$. 
If $\Phi\in Out(F_n)$
such that $\Phi(E)\cap E \ne \emptyset$, let $(\rho_1,\rho_2)$ be in
this intersection. For each primitive $c$ we obtain a bound of the
form $\ell_{\rho_i}(c) \le b_1||c||$ for $i=1$ {\em and} $i=2$, simply
because the maps $\tau_{\rho_i,x}$ have uniform Lipschitz bounds on
$E$. Since $(\rho_1,\rho_2)\in \Phi(E)$ we also obtain a bound of the
form $||\Phi(c)|| \le b_2 \ell_{\rho_i}(c)$, for at least one
$i\in\{1,2\}$. Putting these together we obtain a uniform upper bound on
$||\Phi(c)||/||c||$. This suffices, e.g. by Lemma 3.4 of
\cite{minsky:primitive}, to restrict $\Phi$ to a finite set in
$Out(F_n)$. It follows that the action is  properly discontinuous.

\end{proof}
\medskip

Lemma \ref{PS2 properties} tells us that 
$PS^2_n\cap \overline{\mathcal{R}}_n(G)\times\overline{\mathcal{R}}_n(G)$ is the set
required by Theorem \ref{action on pairs}, provided we can prove that it is non-empty.

The proof of this will take a somewhat different form when
$G=\SL(2,\BC)$ and $G=\SL(2,\BR)$. Although it suffices in fact to
prove the real case since it embeds in the complex case, we give a
separate proof in the complex case since the theory of hyperbolic 3-manifolds can be
applied, giving a more flexible and geometric construction.

\subsection{The complex case}
The proof  will hinge on the following construction:

\begin{lem}\label{pair of graphs}
Let $H$ be the genus $n$ handlebody for $n\ge 3$. 
There exist simple loops  $\alpha_1$ and $\alpha_2$
on  $\partial H$, and a generating set $X$ for $\pi_1(H)$,
such that a representative of each $\alpha_i$ in $F_n$  is contained in a
proper free factor, but
the Whitehead graph of the union, $Wh(\{\alpha_1,\alpha_2\},X)$,
is connected and without cutpoints. 

Moreover, each $\alpha_i$ can be chosen so that $H$ admits a
geometrically finite hyperbolic structure for which $\alpha_i$ is the
unique parabolic. 
\end{lem}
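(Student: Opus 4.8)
The plan is to build $\alpha_1,\alpha_2$ explicitly from a single ``model'' loop by exploiting the symmetry of a genus-$n$ handlebody, and then to separately verify the Whitehead-graph condition and the geometric condition.

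First I would fix a free basis $X=\{x_1,\dots,x_n\}$ for $\pi_1(H)=F_n$ and realize it by a standard spine, so that each $x_i$ is a primitive loop that is a member of the obvious free factor decomposition. The idea is to take $\alpha_1$ to be a simple loop whose conjugacy class is a word $w(x_1,\dots,x_{n-1})$ lying in the proper free factor $\langle x_1,\dots,x_{n-1}\rangle$, chosen so that $Wh(\alpha_1,X)$ already ``uses'' all the vertices $x_1^{\pm},\dots,x_{n-1}^{\pm}$ and is connected on that sub-vertex-set but has no edge touching $x_n^{\pm}$; then take $\alpha_2$ to be a simple loop in the complementary-type free factor (e.g. $\langle x_2,\dots,x_n\rangle$, or a cyclically rotated analogue) whose Whitehead graph supplies exactly the edges at $x_n^{\pm}$ needed to connect $x_n^{\pm}$ to the rest and to kill any remaining cutpoint. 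Since $Wh(\{\alpha_1,\alpha_2\},X)=Wh(\alpha_1,X)\cup Wh(\alpha_2,X)$, one then just checks by hand that the union is connected and $2$-connected (no cutpoint). Concretely I expect a word like $\alpha_1 = x_1 x_2 \cdots x_{n-1}$ together with a handful of short ``cross'' relators, or the commutator-type words used in \cite{minsky:primitive}, will do; the key constraint is that $\alpha_i$ must be represented by a \emph{simple} loop on $\partial H$ and lie in a proper free factor, which forces one to pick the cyclic words carefully (simple loops on $\partial H$ correspond to a restricted class of conjugacy classes). I would therefore choose the $\alpha_i$ to be boundaries of properly embedded disks' neighborhoods or small bandsums thereof, where simplicity is automatic, and compute their Whitehead graphs.

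For the ``moreover'' clause I would invoke the deformation theory of geometrically finite hyperbolic structures on a handlebody: for a handlebody $H$, any simple loop $\alpha$ on $\partial H$ that is \emph{not} diskbounding (i.e. whose conjugacy class is nontrivial in $F_n$) can be realized as the unique parabolic of a geometrically finite structure — one pinches the curve $\alpha$ on the conformal boundary $\partial H$, staying inside the (nonempty, by Ahlfors--Bers / Maskit) space of geometrically finite uniformizations of $H$ with a single rank-one cusp along $\alpha$. So it suffices to guarantee that each chosen $\alpha_i$ is a non-diskbounding simple loop, which we already have since $\alpha_i$ represents a nontrivial primitive-adjacent element of a proper free factor; then the standard pinching construction (as in the cusped handlebody examples of Canary--McCullough or in \cite{minsky:primitive}) produces the required structure.

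The main obstacle I expect is the simultaneous satisfaction of the two competing demands on each $\alpha_i$: it must be simple on $\partial H$ \emph{and} lie in a proper free factor, while the \emph{pair} must have a connected cutpoint-free Whitehead graph. A word that is short enough to be obviously simple tends to have a sparse (hence disconnected or cutpoint-having) Whitehead graph, so the real work is to find two simple loops whose graphs are individually ``deficient'' in the right complementary ways and whose union is exactly right. I would handle this by doing the $n=3$ case completely explicitly (writing down the two cyclic words and drawing $Wh(\{\alpha_1,\alpha_2\},X)$ on the six vertices $x_1^\pm,x_2^\pm,x_3^\pm$), and then reducing general $n$ to $n=3$ by stabilization: add the extra generators $x_4,\dots,x_n$ into the words in a way that only adds edges and never creates a cutpoint, keeping each $\alpha_i$ simple and each inside a proper free factor.
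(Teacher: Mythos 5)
Your combinatorial skeleton is the same as the paper's (two curves lying in complementary proper free factors, each with a Whitehead graph that is connected and cutpoint-free on its own sub-vertex-set, whose union covers all vertices and overlaps in at least two vertices), but the proposal has a genuine gap in the ``moreover'' clause, and it is the part of the lemma that does all the work later. You assert that \emph{any} non-diskbounding simple loop $\alpha$ on $\partial H$ can be realized as the \emph{unique} parabolic of a geometrically finite structure on $H$ by pinching. This is false as a general statement: for example, a $(2,1)$-curve on one handle is a simple non-diskbounding loop representing $x_1^2$, and any discrete torsion-free representation making it parabolic also makes $x_1$ parabolic, so $\alpha$ cannot be the unique parabolic. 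More to the point, the curves you actually need lie in a proper free factor, hence are disjoint from an essential disk of $H$ and are therefore \emph{not} in the Masur domain of $H$ --- and the Masur domain is exactly the hypothesis under which the standard pinching/uniformization results for handlebodies are stated. So the naive ``pinch it on the conformal boundary'' argument does not apply to your curves without further justification (one would at least have to verify the pared-manifold conditions for $(H,\alpha)$ and control the full parabolic locus of the limit). The paper circumvents this precisely by choosing $\alpha_i$ in the Masur domain of a genus-$(n-1)$ sub-handlebody $H_i$ (where the standard theory produces a geometrically finite structure with $\alpha_i$ the sole parabolic, and also gives the connected cutpoint-free Whitehead graph for free), and then extending to all of $H$ by the Klein--Maskit combination theorem, taking the free product with a loxodromic cyclic group for the remaining handle; this keeps the structure geometrically finite with $\alpha_i$ still the unique parabolic.

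There is a second, smaller gap: the existence of the loops themselves. You propose explicit cyclic words and acknowledge that ``the real work is to find two simple loops whose graphs are individually deficient in the right complementary ways,'' but you do not produce them, and verifying that a given cyclic word is represented by a \emph{simple} closed curve on $\partial H$ while having a prescribed Whitehead graph is nontrivial (simple closed curves realize only a restricted set of conjugacy classes). The Masur-domain route disposes of both issues at once: it is a nonempty open set containing simple closed curves, membership implies the Whitehead-graph condition with respect to a suitable generating set (after applying a homeomorphism one may take it to be the fixed one), and membership is also the hypothesis needed for the geometric conclusion. If you want to keep your explicit-word strategy for the combinatorics, you would still need to replace the pinching argument for the geometric clause by something like the paper's combination-theorem argument.
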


\begin{proof}
We can write $H$ as a 
boundary connected sum (i.e. gluing along disks)  
$$
H = T_1 \cup H' \cup T_2
$$
where $H'$ is a handlebody of genus $n-2$ and $T_1$ and $T_2$ are
handlebodies of genus 1, i.e. solid tori. We then rearrange $H$ as a
union of overlapping handlebodies of genus $n-1$,  $H_1 = T_1 \cup H'$
and $H_2 = T_2 \cup H'$. 

Choose generators $X = \{X_1,\ldots,X_n\}$ for $F_n = \pi_1(H)$ so that $X_1$
generates $\pi_1(T_1)$, $X_2$ generates $\pi_1(T_2)$, and the rest generate
$\pi_1(H')$. Now for $i=1,2$ suppose that $\gamma_i$ is an element of
$\pi_1(H_i)$ whose Whitehead graph 
$Wh(\gamma_i,\{X_i,X_3,\ldots,X_n\})$ is connected and without
cutpoints. Considered with respect to all generators, $Wh(\gamma_1,X)$
is disconnected because it has no edges incident to $X_2^\pm$, and
indeed $\gamma_1$ is contained in the proper free factor $\langle
X_1,X_3,\ldots,X_n\rangle$. 
The corresponding statements hold for $\gamma_2$. 

However, 
$Wh(\{\gamma_1,\gamma_2\},X)$ is the union of two connected graphs
without cutpoints, which together meet every vertex and intersect
along at least two vertices (since there are two vertices per
generator). It follows that $Wh(\{\gamma_1,\gamma_2\},X)$ is both
connected and without cutpoints. 

Now as discussed in \cite{minsky:primitive}, we may select $\alpha_i$
in the {\em Masur domain} of $H_i$, and this will imply that the
Whitehead graph of $\alpha_i$ with respect to some generating set will
be connected without cutpoints. Applying a homeomorphism if necessary,
we may assume that the generating set is the one we have already
fixed. Moreover, being in the Masur domain implies that $H_i$ admits a
geometrically finite hyperbolic structure for which $\alpha_i$ is the
sole parabolic. 

One can always
choose representatives of such curves on $H_1$ or on $H_2$ which are
disjoint from the gluing disks, and hence they can be made to lie on
the boundary of $H$. Finally, the geometrically finite representations
we have on $H_i$ can be extended to representations on $H$ which are
still geometrically finite with the $\alpha_i$ as sole parabolics --
this is an immediate consequence of the Klein Combination Theorem
which gives conditions on constructing free products of Kleinian
groups (in this case, the factors are $\rho_i(\pi_1(H_i))$ and a
hyperbolic cyclic group corresponding to $\pi_1(T_{3-i})$) and
describes the type of the resulting group. See Klein \cite{klein:combination} and
Maskit \cite{maskit:klein}. 
\end{proof}

Let $\alpha_1,\alpha_2$ and the generating set $X$ be as in Lemma
\ref{pair of graphs}, and let $\rho_1, \rho_2:\pi_1(H)\to G$ be representations
corresponding to the geometrically finite structures the lemma
provides for $\alpha_1$ and $\alpha_2$. 
We claim that $([\rho_1],[\rho_2])\in PS^2_n$.  The proof follows the
argument in \cite{minsky:primitive} with minor variations:

The property of geometric finiteness implies that each quotient
manifold $N_i = \BH^3/\rho_i(F_n)$ contains a convex 
core $C_i$ which is not compact, but can be written as $H_i \cup
Q_i$ where $H_i$ is a compact handlebody and $Q_i$ is a cusp
neighborhood associated to $\alpha_i$. All closed geodesics in $N_i$
are contained in $C_i$, and if a closed geodesic $\gamma$ penetrates
deeply into $Q_i$ then the corresponding reduced word in $F_n$
contains a high power of the reduced form of $\alpha_i$. This is shown
in the proof of Theorem 4.1 of \cite{minsky:primitive}. 

Hence, after possibly enlarging $H_i$, we have the following property: If a
conjugacy class $\gamma$ in $F_n$ has geodesic representative
$\gamma^i$ which is {\em not} contained in $H_i$, then $Wh(\gamma,X)$
contains $Wh(\alpha_i,X)$. On the other hand by compactness there
exists $K$ and $x\in \BH^3$ so that if $\gamma^i$ is contained in
$H_i$ then the axis of $\gamma^i$ is mapped $K$-quasi-geodesically by
$\tau_{\rho_i,x}$.  Let $P_i$ be the set of conjugacy classes whose
geodesic representatives in $N_i$ are contained in $H_i$, and hence
satisfy the quasi-geodesic condition with respect to $\rho_i$.

What we have shown is that any element {\em not} in $P_1\cup P_2$
has Whitehead graph containing $Wh(\{\alpha_1,\alpha_2\},X)$. Since
this graph is connected and without cutpoints, Whitehead's Lemma tells
us that such an  element cannot
be primitive. We conclude that $P_1\cup P_2$ cover all the primitive
elements, so that $([\rho_1],[\rho_2])\in \PS_n^2$.

Now, since $\alpha_i$ is contained in a proper free factor $B_i<F_n$,
$\rho_i|_{B_i}$ is not a Schottky group. It can therefore be
approximated by dense representations of $B_i$ (as in the proof of
Lemma 3.2 of \cite{minsky:primitive}). It follows that $\rho_i$ can be
approximated by redundant representations of $F_n$. Since $\PS^2_n$ is
open, we conclude that $\PS^2_n\cap
(\overline{\mathcal{R}}_n(G)\times\overline{\mathcal{R}}_n(G))$ is nonempty.

This concludes the proof in the complex case. 

\subsection{The real case}
A discrete faithful representation  $F_n\to\SL_2(\BR)$ corresponds to a
Fuchsian group, and if this group is a lattice with just one parabolic then the
representation is automatically in $\PS_n$, by the main theorem of 
\cite{minsky:primitive}. So we have to consider groups with two
or more parabolics.

Let $\Sigma$ be a sphere with $k \ge 4$ punctures. Then $\pi_1(\Sigma)$
can be written as a free group on $n = k-1$ letters $X_1,\ldots,X_n$,
representing $k-1$ of the punctures, with the last puncture
represented by the product $X_1X_2\cdots X_n$.

Let $g_1$ be a cyclically reduced word in the generators
$X_2,\ldots,X_n$, such that the
Whitehead graph $W_1 = Wh(g_1,\{X_2,\ldots,X_n\})$ is connected and
without cutpoints. Let $\Phi_1\in \aut(F_n)$ be the automorphism
defined by:
\begin{align*}
X_1 &\mapsto X_1 g_1^m \\
X_2 &\mapsto X_2 X_1 g_1^m\\
& \cdots \\
X_n &\mapsto X_n X_1 g_1^m.\\
\end{align*}
(This can be obtained as a composition of Nielsen moves, first
multiplying $X_1$ by the letters in $g_1^m$, and then multiplying each
$X_i$ for $i>1$ by the image of $X_1$.)
If $m$ is chosen sufficiently large then the $\Phi_1$ image of each of
the $k$ punctures has Whitehead graph (with respect to all the
generators) containing $W_1$.

Let $\rho_0:\pi_1(\Sigma) \to \SL_2(\BR)$ be a discrete faithful Fuchsian
representation
taking all $k$ punctures to parabolics, and let 
$$
\rho_1 = \rho_0 \circ \Phi_1^{-1}.
$$
Then the parabolics of $\rho_1$ are in $k$ conjugacy classes, each of
whom by itself has Whitehead graph containing $W_1$. 

Now define $g_2$, $W_2$,  $\Phi_2$ and $\rho_2$ the same way, but
interchanging the roles of $X_1$ with $X_2$.
The graph $W_2$ is then connected and without
cutpoints when restricted to the vertices associated to
$X_1,X_3,\ldots,X_n$.

The rest of the proof goes through in essentially the same way to show
that $([\rho_1],[\rho_2])$ is in $\PS^2_n$. Namely, a conjugacy class
whose axis is badly non-quasi-geodesic in both representations must
wrap around at least one of the parabolics in $\rho_1$ and at least one of the
parabolics in $\rho_2$ as well. Hence its Whitehead graph (with
respect to all $n$ generators) contains
$W_1\cup W_2$. Since $W_1$ and $W_2$ intersect in at least two
vertices (those associated to $X_3$), their union is connected and without cutpoints. 

We see that each $\rho_i$ is approximated by redundant representations as
before, since each contains a parabolic that is inside a free factor
(in fact is itself primitive).

\begin{rem}\label{rem:NMr1}
The non-mixing result Theorem \ref{thm2} extends to every rank one simple Lie group. Indeed, Lemma \ref{PS2 properties} holds in this generality and the same proof applies. The only issue that requires some justification is the non-emptiness of $\PS^2_n\cap\overline{\mathcal{R}}^2$. However, since every simple Lie group $G$ admits a subgroup $H$ locally isomorphic to $\SL_2(\BR)$ such that for some point $x$ in the symmetric space $G/K$ the orbit $H\cdot x$ is isometric to the hyperbolic plane $\BH^2$ (cf. \cite[Theorem 3.7]{Pla-Rap}), this result can be deduced easily from the $\SL_2(\BR)$ analog.
\end{rem}

\section{Some related problems}
Let us end this paper by recalling and suggesting some old and new related problems.

\subsection{The other conjecture of Lubotzky}

First let us repeat Lubotzky's second conjecture \cite{Lub}, mentioned also in \cite{minsky:primitive}, which is still a mystery, even for $\SL_2(\BR)$ and $\SL_2(\BC)$:

\begin{prob}\label{PS+R}
Let $n\ge 3$.
Given a connected simple Lie group, is it true that almost every representation of $F_n$ is either redundant or primitive stable? 
\end{prob}

When $k$ is non-archimedean and $G=\BG(k)$ is the group of $k$
rational points of some Zariski connected simple algebraic group
$\BG$, Problem \ref{PS+R} still makes sense when restricting to {\it
  unbounded} representations. It can be deduced from \cite{Glasner}
that for $\SL_2(k)$ almost every dense representation of $F_n$ is
redundant. Hence, the question in this case is whether almost every
discrete faithful representation is primitive stable or even
Schottky. For higher rank groups, e.g. for $\SL_3(k)$ it is unclear
if the definition of primitive stable representations extends in a
useful way.

\subsection{Does density of primitives imply redundant?}
It is straightforward, that if $f:F_n\to G$ is redundant then $f(P_n)$
is dense in $G$, where $P_n\subset F_n$ is the set of primitive
elements. Moreover, if $G$ is discrete (e.g. finite) then the opposite
is also true, i.e. if $f(P_n)=G$ then $f$ is redundant (consider a
basis containing a primitive element that maps to $1\in G$).

\begin{prob}
Let $G$ be a topological group, or more specifically, a simple Lie group over a local field. Is it true that every representation $f:F_n\to G$ for which $f(P_n)$ is dense, is redundant?
\end{prob}

\subsection{Extending the results of this paper to semisimple groups}
In this paper we restricted ourselves to the case where the group $G$ is simple rather than semisimple. However, as remarked in \ref{rem:semisimple}, over $\BR$ or $\BC$, Theorems \ref{thm1} and \ref{thm3} remain true, with only slight modifications in the arguments, when $G$ has more than one factor. However for nonarchimedean fields, although we expect that the theorems remain valid, some parts of our proofs do not directly apply:

\begin{prob}
Extend the result of this paper to all semisimple groups over local fields, and more generally, to groups of the form $\prod_{i=1}^n\BG_i(k_i)$ where $\BG_i$ are simple algebraic groups and $k_i$ are local fields.
\end{prob}

\subsection{Other notions of weak mixing}

For a group $G$ acting on a finite measure space $X$, weak mixing is equivalent to each of the following:

\begin{enumerate}
\item the action of $G$ on $X\times X$ is ergodic,
\item for every finite measure preserving ergodic space $Y$, the action on $X\times Y$ is ergodic,
\item the unitary representation $L^2(X)$ has no finite dimensional sub-representation but the constants.
\end{enumerate}

For a compact Lie group $G$, since the space $\mathcal{R}_n(G)=\Hom(F_n,G)$ has finite measure, the three conditions above are equivalent and, 
as shown in \cite{Gelander}, satisfied whenever $n\ge 3$.  

However, when $G$ is noncompact and $n\ge 3$, the space $\mathcal{R}_n(G)$ is an infinite measure space, hence the various notions of {\it weak mixing} are no longer equivalent. One may still ask whether for every ergodic probability space $Y$ the action of $\aut(F_n)$ on $\mathcal{R}_n(G)\times Y$ is ergodic. In particular, one may study this question in the special case $Y=\Hom(F_n,H)$ where $H$ is a connected compact group.

\subsection{The notion of {\it Spread} for topological groups}
Recall that a finite (or discrete) group $G$ is said to have {\it
  spread} $k$ if for any $k$ nontrivial elements $g_1,\ldots,g_k\in
G\setminus \{1\}$ there is $h\in G\setminus \{1\}$ which generates $G$
simultaneously with each of the $g_i$'s, i.e. $\forall i, \langle
h,g_i\rangle=G$. Any finite simple group has spread $2$ (see
\cite{GKS,GS}). We can extend the definition of spread to topological
groups by requiring that the closure $\overline{\langle h,g_i\rangle}$
equal $G$ for all $i$. 

Let now $G$ be a connected center-free simple Lie group. By \cite{AV} $G$ has spread $1$ (see also \cite{Ku1,Ku2}). Additionally, 
given $g\in G$, it is not hard to show that if $\langle g,h\rangle$ is
dense in $G$ for some $h\in G$ then the set $\{h\in
G:\overline{\langle g,h\rangle}=G\}$ contains a neighborhood of the
identity minus some exponential proper subvariety. Hence if $G$ has spread $1$, it has spread $k$ for any finite $k$. 
The same result holds with respect to Zariski topology.

Similarly, one can define the notion of {\it random-spread} as follows: Say that $G$ has random-spread $k$ if for almost every $k$ elements $g_1,\ldots,g_k$ in $G$ there is a simultaneous generating partner, i.e. $h$ such that $\langle h,g_i\rangle$ is dense for each $i$. One can deduce from the discussion in Section \ref{sec:dense} that every connected semisimple Lie group has random-spread $k$ for every finite $k$. 

It might be interesting however to study the notion of random-spread (as well as the true spread and other variants of it) for semisimple Lie groups over non-Archimedean local fields.

\end{document}